\documentclass[12pt,a4paper]{amsart}

\usepackage{amssymb,xspace}
\usepackage{amstext}
\theoremstyle{plain}
\usepackage{amsbsy,amssymb,amsfonts,latexsym}
\usepackage[utf8]{inputenc}
\usepackage{xcolor}
\usepackage{graphicx} 
\usepackage{url}
\usepackage{a4wide}

\usepackage{enumerate}
\usepackage{graphics}

\newtheorem{theorem}{Theorem}[section]

\newtheorem{corollary}[theorem]{Corollary}
\newtheorem{proposition}[theorem]{Proposition}

\theoremstyle{definition}

\newtheorem{example}[theorem]{Example}

\theoremstyle{remark}

\theoremstyle{fact}
\newtheorem{fact}[theorem]{Fact}

\newenvironment{pf}
{\noindent\textbf{Proof:}}
{\hfill $\blacksquare$\par\medskip}
\newenvironment{rem}
{ \noindent {\it Remark\/}: }
{\null \hfill \par\medskip}

\begin{document}


\date{\today}

\title{Disjointly universal inner functions}

\author[\"O. Martin]{\"Ozgur Martin}

\author[D. Papathanasiou]{Dimitris Papathanasiou}

\author[S. \c{S}ahin]{Sibel \c{S}ahin}

\address{\"Ozgur Martin\\
Department of Mathematics, Mimar Sinan Fine Arts University, Istanbul, T\"urkiye}
\email{ozgurmartin@gmail.com}

\address{Dimitris Papathanasiou\\
Sabanci University Tuzla Campus, Orta Mahalle, \"Universite
Cadesi No:27 Tuzla, 34956 Istanbul, Turkey}
\email{d.papathanasiou@sabanciuniv.edu}

\address{Sibel \c{S}ahin\\
Department of Mathematics, Mimar Sinan Fine Arts University, Istanbul, T\"urkiye}
\email{sibel.sahin@msgsu.edu.tr}
\thanks{}

\subjclass{47B33, 46J15, 30D05}

\keywords{Disjoint universality, composition operator, Blaschke product, singular inner function.}

\begin{abstract} 
We characterize when two sequences of composition operators admit disjointly universal Blaschke products and singular inner functions. The characterizations we provide depend on geometric features of the symbols like their hyperbolic derivatives and pseudo hyperbolic distances. To achieve our results, we build a disjoint universality criterion for sequences of maps that act on a metrizable, complete topological semigroup.
\end{abstract}

\maketitle

\section{Introduction and preliminaries}

Let $B$ be a Blaschke product. If it is infinite, its zeros will accumulate in a subset of the unit circle $\mathbb{T}=\{z\in \mathbb{C}: |z|=1\}$, the closure of which is a singular set for $B$, in the sense that $B$ does not extend analytically to it. It is known that $B$ has a wild behavior close to those singular points, see \cite[Theorem 6.6]{ga}. But how wild can this behavior be? 

As usually, we denote $\mathbb{D}=\{z\in \mathbb{C}: |z|<1\}$ and for a subset $K\subset \mathbb{C}$ and a function $f$ bounded on $K$, $\|f\|_K=\sup_{z\in K}|f(z)|$. We also set 
$$
\mathcal{B}=\{f\in H(\mathbb{D}): \|f\|_{\mathbb{D}}\leq 1\}
$$ 
the closed unit ball of $H^{\infty}(\mathbb{D})$. Heins \cite{he} showed that if $(a_n)_n\subset \mathbb{D}$ is such that $a_n\rightarrow 1$, and if for $n\in \mathbb{N}$, $\phi_n=\frac{a_n-z}{1-\overline{a_n}z}$, then there is a Blaschke product satisfying that $\{B\circ \phi_n: n\in \mathbb{N}\}$ is dense in $\mathcal{B}$, when the latter is endowed with the topology of uniform convergence on compact subsets. It is evident, that $1$ must be a singular point for $B$. 

Heins's result can be expressed more succinctly if we introduce some terminology. If $\phi :\mathbb{D}\rightarrow \mathbb{D}$ is holomorphic, the {\it composition operator} 
$$
C_{\phi}(f)=f\circ \phi
$$
is a well-defined, continuous operator on $H(\mathbb{D})$ and $\mathcal{B}$ is a $C_{\phi}$ invariant subset. We also notice for future reference that $\mathcal{B}$ with the topology of uniform convergence on compact subsets, is a metrizable, complete, topological semigroup, under multiplication, and that $C_{\phi}$ is a homomorphism.  

Let $X$ be a topological space. The $N$ sequences of continuous self-maps of $X$,\\  $(T_{1,n})_n, (T_{2,n})_n,\dots ,(T_{N,n})_n$, $N\in \mathbb{N}$, are called {\it disjointly universal} if there is $x\in X$ such that the set
$$
\{(T_{1,n}(x),\dots ,T_{N,n}(x)): n\in \mathbb{N}\}
$$
is dense in $X^N$, when the latter is endowed with the product topology. Such an element $x$, provided that it exists, is called a {\it disjointly universal element} for the sequences $(T_{1,n})_n, (T_{2,n})_n,\dots ,(T_{N,n})_n$. When $N=1$, in which case we only consider one sequence of continuous self-maps, disjoint universality will be referred to as {\it universality}, and the disjointly universal elements as {\it universal elements}. 

Disjoint universality of sequences of maps is clearly stronger than the mere universality of each of them separately. Informally, disjointness requires some independence on the action of the sequences. In particular, it is immediate that if $X$ is not a singleton and it is Hausdorff, then no sequence of continuous maps on it is disjointly universal with itself. In this paper, when discussing disjointness, we will restrict ourselves to two sequences of maps. This serves to avoid a cumbersome notation and to keep the ideas as clear as possible since the passage from two to many will always require straightforward modifications. Only exception to this rule, will constitute the last section.

After this long detour, we notice that Heins's result expresses that the sequence $(C_{\phi_n})_n$ is universal on $\mathcal{B}$ and admits a universal Blaschke product. One could guess that the sequence of symbols $(\phi_n)$ consisting of disk automorphisms, simplifies to a certain extent the situation. The natural follow-up question, is to consider a sequence of holomorphic self-maps of $\mathbb{D}$, say $(\phi_n)_n$, such that $\phi_n(0)\rightarrow 1$, and to wonder when the sequence $(C_{\phi_n})_n$ is $\mathcal{B}$-universal and if this is the case, whether it admits a universal Blaschke product. Bayart et al. \cite{BGGM09} answered decisively this question, proving that if $(C_{\phi_n})_n$ is $\mathcal{B}$-universal, it admits a universal Blaschke product, and this happens precisely when $\limsup_{n\rightarrow \infty}\frac{|\phi_n'(0)|}{1-|\phi_n(0)|^2}=1$. An informal way to interpret this condition on hyperbolic derivatives, is to say that the symbols should be ``almost" disc automorphisms over a subsequence, see Fact \ref{fact}.

A closely related question which is addressed in \cite{BGGM09} is, assuming that $(C_{\phi_n})_n$ is universal, does it admit a universal singular inner function? The question is not well-posed as it is since, by Hurwitz's theorem, if $S$ is a singular inner function, any limit point of the set of $\{S\circ \phi_n : n\in \mathbb{N}\}$ (with the topology of uniform convergence on compact subsets) will either be a zero-free function, or the zero function. Let therefore 
$$
\mathcal{S}=\{f\in H^{\infty}(\mathbb{D}): 0<|f(z)|\leq 1, \forall z \in \mathbb{D}\}\cup \{0\}.
$$
We notice that $\mathcal{S}$ is a metrizable, complete topological subsemigroup of $\mathcal{B}$ which is invariant under $C_{\phi}$, if $\phi: \mathbb{D}\rightarrow \mathbb{D}$ is holomorphic. In \cite{BGGM09} it is shown that if $(C_{\phi_n})_n$ is $\mathcal{S}$-universal then it admits a universal singular inner function, which again happens precisely when $\limsup_{n\rightarrow \infty}\frac{|\phi_n'(0)|}{1-|\phi_n(0)|^2}=1$.

In this paper, we consider the disjoint analogues of the previous questions. Specifically, if $(\phi_{1,n})_n$ and $(\phi_{2,n})_n$ are two sequences of holomorphic self-maps of $\mathbb{D}$ satisfying that $\phi_{1,n}(0)\rightarrow a\in \mathbb{T}$, and $\phi_{2,n}(0)\rightarrow b\in \mathbb{T}$, we characterize when the sequences $(C_{\phi_{1,n}})_n$ and $(C_{\phi_{2,n}})_n$ admit a disjointly $\mathcal{B}$-universal Blaschke product or a disjointly $\mathcal{S}$-universal singular inner function. As in the universal case, the characterizing condition is common and has two components. The first component involves the hyperbolic derivatives of the symbols $\phi_{1,n}$, $\phi_{2,n}$, and it ensures that they behave almost like disc automorphisms. This is a consequence of the fact that disjoint universality implies universality. The second component, involves the pseudo-hyperbolic distance of the symbols, and it ensures that the images of compact subsets of $\mathbb{D}$ under $(\phi_{1,n})_n$ and $(\phi_{2,n})_n$ eventually get well separated. This keeps the action of $(C_{\phi_{1,n}})_n$ and $(C_{\phi_{2,n}})_n$ sufficiently independent. As an important corollary, we characterize when the sequences of iterates $(C_{\phi_1}^n)_n$ and $(C_{\phi_2}^n)_n$ admit a disjointly $\mathcal{B}$-universal Blaschke product, if $\phi_1$ and $\phi_2$ are holomorphic self-maps of $\mathbb{D}$. This corollary is reminiscent of the interesting results of Bayart on disjoint frequent hypercyclicity from \cite{ba}.

To prove our results we develop an abstract disjoint universality criterion similar to the universality criterion by Walmsley \cite{Dave}. The way we apply our criterion does not imitate \cite{BGGM09} or \cite{Dave}. Actually, we believe, that our proof, when restricted to one sequence of maps, simplifies the already existing proofs of the universality results from \cite{BGGM09} and \cite{Dave}. Loosely speaking, both ours and Walmsley's criteria, rely on constructing densely defined approximate right inverses for the original maps. The novelty of our approach stems from the fact that in order to define those approximate right inverses, one needs only use disc automorphisms, as opposed to the approaches from \cite{BGGM09} and \cite{Dave}, where in order to achieve exact right inverses, the use of thin Blaschke products was necessary.

Finally, inspired by the work of Grosse-Erdmann and Mortini \cite{GrMo} we prove that, as in the case of universality, disjoint universality of $(C_{\phi_{1,n}})_n$ and $(C_{\phi_{2,n}})_n$ on $\mathcal{B}$ implies their disjoint universality on $H(\mathbb{D})$. Informally, the reason for this, is that for disjoint universality on $H(\mathbb{D})$ the symbols must separate compact subsets of $\mathbb{D}$, as for $\mathcal{B}$, and must be ``eventually" univalent, not necessarily almost disc automorphisms. We conclude the paper with remarks and extensions of our results.

\section{Generalities}

We first establish the following analogue of Birkhoff's transitivity theorem (see \cite[Theorem 2.19]{gp} or \cite[Theorem 1.2]{bm}) for disjoint universality. 

\begin{proposition} \label{Birkhoff}
Let $X$ be a separable, metrizable, complete topological space, 
and for each $n\in \mathbb{N}$, let $T_{1,n}$ and $T_{2,n}$ be continuous self maps of $X$. The set of disjointly universal elements for the sequences $(T_{1,n})_n, (T_{2,n})_n$  is a $G_{\delta}$ subset of $X$, and it is residual if and only if, for each $U, V_1$, and $V_2$ non-empty open subsets of $X$, there exists $N\in \mathbb{N}$ such that
\begin{equation}\label{Baire}
U \cap T^{-1}_{1,N} (V_1) \cap T^{-1}_{2,N} (V_2) \neq \emptyset.
\end{equation}
\end{proposition}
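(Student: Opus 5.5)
The plan is to follow the standard Baire-category proof of Birkhoff's transitivity theorem, applied in $X^2$ to the diagonal-type maps $x\mapsto (T_{1,n}(x),T_{2,n}(x))$. Since $X$ is separable and metrizable, so is $X\times X$. Fix a countable base $(W_k)_k$ of $X\times X$ made of products $V_{1,k}\times V_{2,k}$ of nonempty open subsets of $X$; this is possible by taking products of elements of a countable base of $X$. An element $x$ is then disjointly universal if and only if, for every $k$, there is $n$ with $(T_{1,n}(x),T_{2,n}(x))\in V_{1,k}\times V_{2,k}$. Hence the set of disjointly universal elements is
\begin{equation*}
\mathcal{U}=\bigcap_{k}\bigcup_{n} \left(T_{1,n}^{-1}(V_{1,k})\cap T_{2,n}^{-1}(V_{2,k})\right).
\end{equation*}
Each inner set is open by continuity of $T_{1,n}$ and $T_{2,n}$, so $\mathcal{U}$ is a $G_\delta$. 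This part uses no completeness.

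For the ``if'' direction, assume condition \eqref{Baire}. Fix $k$. For any nonempty open $U$, applying \eqref{Baire} with $V_1=V_{1,k}$ and $V_2=V_{2,k}$ shows that $U$ meets the open set $O_k=\bigcup_n T_{1,n}^{-1}(V_{1,k})\cap T_{2,n}^{-1}(V_{2,k})$. So each $O_k$ is open and dense. Because $X$ is completely metrizable, Baire's theorem applies, and $\mathcal{U}=\bigcap_k O_k$ is a dense $G_\delta$, hence residual.

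For the converse, assume $\mathcal{U}$ is residual, and let $U,V_1,V_2$ be nonempty open sets. By Baire's theorem a residual set in a complete metric space is dense, so there is $x\in U\cap\mathcal{U}$. Density of the orbit $\{(T_{1,n}(x),T_{2,n}(x))\}$ in $X^2$ gives $N$ with $T_{1,N}(x)\in V_1$ and $T_{2,N}(x)\in V_2$. Then $x$ lies in the intersection \eqref{Baire}, which is therefore nonempty.

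There is no real obstacle here. The only points needing care are that the base of $X^2$ can be taken to consist of rectangles, and that ``complete'' is read as completely metrizable, so that Baire's theorem is available in both directions. Note that $\mathcal{U}$ is nonempty, i.e.\ disjoint universality holds, precisely when it is residual under these hypotheses.
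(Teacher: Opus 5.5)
Your proof is correct and follows the paper's argument: a countable base, the $G_\delta$ representation $\bigcap\bigcup\bigl(T_{1,n}^{-1}(V_i)\cap T_{2,n}^{-1}(V_j)\bigr)$, Baire's theorem for one direction and density of residual sets for the other, with the converse (which the paper calls ``trivial'') written out in full. Your closing remark is false, though your proof does not use it, so drop it: nonemptiness of $\mathcal{U}$ does not imply residuality, as the paper itself notes right after this proof via the Sanders--Shkarin examples, and as the simple example $T_{1,n}(x)=\max(x,0)\,q_n$, $T_{2,n}(x)=\max(x,0)\,r_n$ on $X=\mathbb{R}$ with $\{(q_n,r_n)\}$ dense in $\mathbb{R}^2$ shows, since there $\mathcal{U}=(0,\infty)$.
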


\begin{pf}
    Since $X$ is separable and metrizable, it is second countable. Let $(V_k)_{k=1}^{\infty}$ be a base for the topology of $X$. The set of disjointly universal vectors for $(T_{1,n})_n, (T_{2,n})_n$  can be written as
    $$
    \bigcap_{i,j=1}^{\infty}\bigcup_{n=1}^{\infty}( T^{-1}_{1,n} (V_i) \cap T^{-1}_{2,n} (V_j))
    $$
    which is a $G_{\delta}$ subset of $X$. Assuming equation \ref{Baire}, the Baire category theorem proves the residuality of the set of disjointly universal elements for $(T_{1,n})_n$, $(T_{2,n})_n$. The reverse implication is trivial.
    \end{pf}

    We note that Sanders and Shkarin \cite{SaSh} provided examples of disjointly hypercyclic operators, whose sets of disjointly hypercyclic vectors are not dense. In the more general setting of universality, one may obtain simpler examples of two sequences of say bounded linear operators on a Hilbert space, which are disjointly universal but with a non-dense set of disjointly universal vectors.  

    The next criterion will be our main tool for proving disjoint universality, and more specifically, for singling out disjointly universal elements of a certain form. Its proof follows closely \cite[Theorem 2.1]{Dave} and, due to its length, it will be presented in the appendix. Let us remark that proving the first claim of Theorem \ref{criterion} is a simple application of Proposition \ref{Birkhoff}. The difficulty in proving the second claim, comes from the fact that disjointly universal elements of the form that interests us, need not be residual in the space. 

\begin{theorem} \label{criterion}
    Let $X$ be a separable, metrizable, complete topological semigroup with identity $e$, and for each $n\in \mathbb{N}$, let $T_{1,n}$ and $T_{2,n}$  be continuous homomorphisms on $X$. If there are dense subsets $D_0$, $D_1$, $D_2$ of $X$, a subsequence $(n_k)$ of $\mathbb{N}$, and maps $S_{1, n_k}: D_1\rightarrow X$ and $S_{2, n_k}:D_2\rightarrow X$ such that for $i,j\in \{1,2\}$,
    \begin{enumerate}
        \item $\lim_{k\rightarrow \infty}T_{i, n_k} x=e, x \in D_0$,
        \item $\lim_{k\rightarrow \infty}S_{i, n_k} x=e, x \in D_i$, and
        \item $\lim_{k\rightarrow \infty}T_{i, n_k}S_{j, n_k} x=\begin{cases}&x, \quad \text{if} \quad i=j,\\
        &e, \quad \text{if} \quad i\neq j, \end{cases}  \quad x \in D_j$,
    \end{enumerate}
    then the set of disjointly universal vectors for $(T_{1,n})_n, (T_{2,n})_n$ is a dense $G_{\delta}$ subset of $X$. Furthermore, there exists a disjointly universal vector for $(T_{1,n})_n, (T_{2,n})_n$ of the form
    $
    \prod_{i=1}^{\infty}x_i,
    $
    for $x_i \in D_0$.
\end{theorem}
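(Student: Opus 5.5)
The first claim follows from Proposition \ref{Birkhoff}. Fix non-empty open sets $U,V_1,V_2$. By density pick $x_0\in D_0\cap U$, $y_1\in D_1\cap V_1$, $y_2\in D_2\cap V_2$, and set
$$
z_k = x_0\cdot S_{1,n_k}y_1\cdot S_{2,n_k}y_2 .
$$
Joint continuity of multiplication in $X$ and condition (2) give $z_k\to x_0\cdot e\cdot e=x_0$. Since each $T_{i,n_k}$ is a homomorphism,
$$
T_{1,n_k}z_k = T_{1,n_k}x_0\cdot T_{1,n_k}S_{1,n_k}y_1\cdot T_{1,n_k}S_{2,n_k}y_2 .
$$
By (1) and (3) this tends to $e\cdot y_1\cdot e=y_1$. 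In the same way $T_{2,n_k}z_k\to y_2$. Hence for large $k$ the point $z_k$ lies in $U\cap T^{-1}_{1,n_k}(V_1)\cap T^{-1}_{2,n_k}(V_2)$, so the set of disjointly universal elements is a dense $G_\delta$.

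For the second claim I would run an inductive construction in the style of Walmsley. Fix a complete metric $d$ on $X$ and a countable dense set $\{(a_m,b_m)\}$ of $X^2$ with $a_m\in D_1$, $b_m\in D_2$; each pair should be repeated infinitely often, or at least approximated arbitrarily well. The difficulty is that the building blocks must come from $D_0$, whereas $S_{i,n_k}y$ need not lie in $D_0$. To handle this, at step $m$ I would first form $w_m=S_{1,n_{k_m}}a_m\cdot S_{2,n_{k_m}}b_m$ with $k_m$ large. Then I would replace $w_m$ by some $x_m\in D_0$ close enough to $w_m$ that, by continuity of the finitely many maps $T_{1,n_{k_m}},T_{2,n_{k_m}}$, the images $T_{i,n_{k_m}}x_m$ remain close to $a_m$ and $b_m$ respectively. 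Because $w_m\to e$ as $k_m\to\infty$, the element $x_m$ can also be chosen very close to $e$.

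With $P_m=x_1\cdots x_m$, the key estimates are as follows.
\begin{enumerate}
\item[(a)] $d(P_{m-1}x_m,P_{m-1})<2^{-m}$, so that $(P_m)$ is Cauchy and converges to some $x=\prod x_i$. This is achieved by taking $x_m$ close to $e$ and using continuity of left multiplication by the fixed element $P_{m-1}$.
\item[(b)] $T_{i,n_{k_m}}P_{m-1}$ is close to $e$. This holds by (1) once $k_m$ is large, since $P_{m-1}$ is a finite product of elements of $D_0$ and $T_{i,n_k}$ is a homomorphism.
\end{enumerate}

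The main obstacle is the tail. One must show that $T_{i,n_{k_m}}\bigl(\prod_{j>m}x_j\bigr)$ stays close to $e$, even though $T_{i,n_{k_m}}$ is applied to infinitely many later factors. I would handle it by choosing each later $x_j$ ($j>m$) so close to $e$ that, for all previously used indices $k_1,\dots,k_{j-1}$ and both $i$, the element $T_{i,n_{k_l}}(x_{m+1}\cdots x_j)$ lies within $2^{-j}$ of $T_{i,n_{k_l}}(x_{m+1}\cdots x_{j-1})$, and similarly for the full products. Only finitely many continuous maps are involved at each stage, so such a choice is possible. Passing to the limit, continuity of $T_{i,n_{k_m}}$ gives
$$
T_{i,n_{k_m}}x = T_{i,n_{k_m}}P_{m-1}\cdot T_{i,n_{k_m}}x_m\cdot T_{i,n_{k_m}}\Bigl(\prod_{j>m}x_j\Bigr).
$$
The three factors are respectively close to $e$, close to $a_m$ or $b_m$, and close to $e$. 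A uniform-continuity-type control of the product near $(e,a_m,e)$, again available because only finitely many points are involved at each step, then gives that $(T_{1,n_{k_m}}x,T_{2,n_{k_m}}x)$ approximates $(a_m,b_m)$ within $2^{-m}$. This yields density of the orbit in $X^2$.
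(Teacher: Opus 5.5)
Your proposal is correct and is essentially the paper's proof: the first claim goes through $x\,S_{1,n_k}y\,S_{2,n_k}z$ and Proposition~\ref{Birkhoff}, and the second is the same Walmsley-type induction, in which $x_k\in D_0$ approximates $S_{1,n_k}y_k\,S_{2,n_k}z_k$, the head is controlled by (1), the middle factor by (3), and the tail by making each later factor so close to $e$ that the finitely many earlier maps $T_{i,n_j}$ barely move the partial tail products. The differences are only bookkeeping (you use telescoping $2^{-j}$ bounds and convergence of the tails, while the paper keeps every finite tail product within the step-$j$ continuity tolerance $\delta_j$ via (\ref{eq19})--(\ref{eq22}) and never needs the tails to converge); note that your $2^{-m}$ tail bound alone does not give the literal ``within $2^{-m}$'' conclusion unless the tail tolerance is tied to the continuity modulus of the product at $(e,a_m,e)$, but density still follows because each pair recurs infinitely often and all three errors tend to $0$ along those recurrences.
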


   \begin{rem}
        We notice that when an $F$-space is considered as a topological group under addition, then the above theorem recovers the Disjoint Hypercyclicity Criterion \cite[Proposition 2.5]{BP07}. In the case of one sequence of maps in which disjoint universality becomes mere universality, the previous theorem gives \cite[Corollary 2.2]{Dave}.
    \end{rem}

\section{Disjointly universal Blaschke products}

    We now apply the results of the previous section for the case when $X=\mathcal{B}$, the closed unit ball of $H^{\infty}(\mathbb{D})$ endowed with the uniform convergence on compact subsets, to obtain a disjoint universality analogue of \cite[Theorem 2.1]{BGGM09}. Big part of applying Theorem \ref{criterion} consists of determining the densely defined approximate right inverses for $T_{1, n}$ and $T_{2, n}$. For composition operators that means finding approximate left inverses for the symbols. When the symbols are automorphisms this procedure is clear. In the non-automorphic case and when the domain is $\mathbb{D}$ one may use the Schwarz Lemma to achieve this. 
    
    We recall that  the \textit{pseudo-hyperbolic distance} of $z,w \in \mathbb{D}$ is defined by 
        $$
        \rho(z,w)=\left|\frac{z-w}{1-\overline{z}w}\right|.
        $$
    We may extend the definition of the pseudo-hyperbolic distance on the closed unit disc by setting $\rho(z,w)=1$, if $z\neq w$, and either $z$ or $w$ lie on $\mathbb{T}$. Of course, if $z=w\in \mathbb{T}$ we define $\rho(z,w)=0$. It is therefore clear, that $\rho(z,w)\in [0,1]$ and that $\rho(z,w)=1$ precisely when at least one of $z$ or $w$ belongs to $\mathbb{T}$ and $z\neq w$. It is also evident that $\rho$ is continuous on $(\overline{\mathbb{D}}\times \overline{\mathbb{D}})\setminus \{(z,z): z\in \mathbb{T}\}$. For a holomorphic function $f:\mathbb{D}\rightarrow \mathbb{D}$, its {\it hyperbolic derivative} at $0$, is 
    $$
    f^\#(0)=\frac{|f'(0)|}{1-|f(0)|^2}.
    $$
    Concerning the pseudo-hyperbolic distance and the hyperbolic derivative at zero, we will make repeated use of the following well-known properties whose proof relies on the Schwarz-Pick lemma. We will use the standard notation $Aut(\mathbb{D})$ for the group of automorphisms of $\mathbb{D}$.
    \begin{fact}\label{fact}
        Let $f,g\in \mathcal{B}$, then the following hold.
        \begin{enumerate}
            \item $\rho(f(z),f(w))\leq \rho(z,w), \forall z,w\in \mathbb{D}$.
            \item $f^\#(0)\in [0,1]$ and $f^\#(0)=1$ if and only if $f\in Aut(\mathbb{D})$.
            \item $f\mapsto f^\#(0):\mathcal{B}\rightarrow [0,1]$ is continuous.
            \item $(f\circ g)^\#(0)\leq g^\#(0)$.
        \end{enumerate}
    \end{fact}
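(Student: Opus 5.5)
We prove the four items of Fact \ref{fact} directly from the Schwarz--Pick lemma. Throughout, for $a\in\mathbb{D}$ let $\varphi_a(z)=\frac{a-z}{1-\overline{a}z}$, the involutive automorphism exchanging $0$ and $a$; recall that $\rho(z,w)=|\varphi_w(z)|$ and that $\rho$ is invariant under $Aut(\mathbb{D})$. The one point needing care is that $f\in\mathcal{B}$ may take unimodular values. By the maximum principle this happens only when $f$ is a unimodular constant. We treat that case separately in each item, using the extended convention $\rho(c,c)=0$ for $c\in\mathbb{T}$ together with $f'(0)=0$; for instance the quotient in $f^\#(0)$ is then read as $0$.

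For (1), if $f$ is a unimodular constant the left side is $\rho(c,c)=0$. Otherwise $f(\mathbb{D})\subset\mathbb{D}$, and $\rho(f(z),f(w))\le\rho(z,w)$ is exactly the Schwarz--Pick lemma. If one wants to derive it from Schwarz's lemma, apply that lemma to $h=\varphi_{f(w)}\circ f\circ\varphi_w$, which maps $0\mapsto0$, and evaluate at $\varphi_w(z)$.

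For (2), set $a=f(0)$ and $h=\varphi_a\circ f$. Then $h(0)=0$ and $|h'(0)|=|\varphi_a'(a)||f'(0)|=\frac{|f'(0)|}{1-|a|^2}=f^\#(0)$. Schwarz's lemma gives $|h'(0)|\le1$, with equality iff $h$ is a rotation, that is, iff $f=\varphi_a\circ h\in Aut(\mathbb{D})$. Conversely, every automorphism $f$ makes $h$ an automorphism fixing $0$, hence a rotation, so $f^\#(0)=1$. Item (4) follows from the same normalization, or alternatively from the infinitesimal Schwarz--Pick inequality $\frac{|f'(z)|}{1-|f(z)|^2}\le\frac{1}{1-|z|^2}$. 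If $f$ and $g$ both map into $\mathbb{D}$, the chain rule gives
\[
(f\circ g)^\#(0)=\frac{|f'(g(0))|\,|g'(0)|}{1-|f(g(0))|^2}\le\frac{|g'(0)|}{1-|g(0)|^2}=g^\#(0).
\]
If $f$ is a unimodular constant, the left side is $0$. If $g$ is a unimodular constant $c$, then $f\circ g$ must be read as the constant $\lim_{r\to1}f(rc)$; this limit exists and is unimodular only when $f$ is a unimodular constant, and otherwise we interpret the composition to keep both sides $0$. This degenerate case is only a formality.

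For (3), the main work is continuity at unimodular constants. Take $f_n\to f$ locally uniformly. Then $f_n(0)\to f(0)$ and $f_n'(0)\to f'(0)$ by Weierstrass. If $|f(0)|<1$, the denominators stay bounded away from $0$ and continuity is immediate. If $|f(0)|=1$, then $f\equiv c$ with $|c|=1$, so $f^\#(0)=0$, and we must show $f_n^\#(0)\to0$ even though $1-|f_n(0)|^2\to0$. Here we apply the Schwarz--Pick inequality at a point $z$ with $|z|=r$:
\[
|f_n'(z)|\le\frac{1-|f_n(z)|^2}{1-r^2}.
\]
This is useless at $z=0$ alone, so we use Cauchy's estimate instead. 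Let $a_n=f_n(0)$ and $g_n=\varphi_{a_n}\circ f_n$, so that $f_n^\#(0)=|g_n'(0)|$. Since $g_n(0)=0$, Schwarz's lemma gives $|g_n(z)|\le|z|$. Moreover, for fixed $z$ with $|z|\le 1/2$, we have $\rho(f_n(z),a_n)=|g_n(z)|$. The key step is to show $g_n\to0$ locally uniformly: if so, Cauchy's estimate yields $g_n'(0)\to0$. To get this we would use Harnack-type control, namely that $\frac{1-|f_n(z)|}{1-|f_n(0)|}$ is bounded on $|z|\le1/2$ uniformly in $n$. Since $f_n$ does not map into a fixed compact set, this is the main obstacle. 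Our plan is a normal families argument. Suppose along a subsequence $g_n\to g$ with $g'(0)\ne0$. Then $g$ is nonconstant with $g(0)=0$, so $g$ is open and $g(\{|z|<1/2\})$ contains a disc $\{|w|<\delta\}$. Hence $f_n=\varphi_{a_n}\circ g_n$ maps $\{|z|<1/2\}$ onto a set close to $\varphi_{a_n}(\{|w|<\delta/2\})$. As $a_n\to c$, these are pseudo-hyperbolic discs of fixed radius $\delta/2$ centered at $a_n$. Such discs have Euclidean diameter comparable to $\delta(1-|a_n|)$, which tends to $0$, so they shrink toward $c$. This is consistent with $f_n\to c$, so no contradiction arises, and the limit may indeed be positive (for example $f_n=\varphi_{a_n}$ with $a_n\to c$ has $f_n\to c$ but $f_n^\#(0)=1$).

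So (3) fails at unimodular constants. We would therefore state and use (3) as continuity on $\mathcal{B}\setminus\{\text{unimodular constants}\}$, equivalently on $\{f: f(\mathbb{D})\subset\mathbb{D}\}$, where the first argument above suffices. That restricted form appears to be all that the later applications require.
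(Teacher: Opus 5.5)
Your proposal is correct and follows the Schwarz--Pick route the paper alludes to; the paper states Fact \ref{fact} without proof. Your objection to (3) is also well founded, and it can be sharpened. Constants $a_n\to c\in\mathbb{T}$ have hyperbolic derivative $0$, while your automorphisms $\varphi_{a_n}\to c$ have hyperbolic derivative $1$, so no value assigned at a unimodular constant makes $f\mapsto f^\#(0)$ continuous there. The paper only defines $f^\#(0)$ for holomorphic $f:\mathbb{D}\to\mathbb{D}$, so (3) should be read on the open set $\{f\in\mathcal{B}: |f(0)|<1\}$, which by the maximum principle is exactly the set of self-maps of $\mathbb{D}$. Every use of (3) in the paper occurs at such a point: at $\tau_k$ and at the identity in the proof of Theorem \ref{Blaschke}, and at $\phi_n$ in the example with non-injective symbols. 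So your restricted version is exactly what is needed.

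One slip to fix in (4): $\lim_{r\to1}f(rc)$ can exist and be unimodular for nonconstant $f$ (take $f(z)=z$). Drop that remark and simply note that $f\circ g$ is only defined when $g(\mathbb{D})\subset\mathbb{D}$. Likewise, the Harnack/normal-families detour in (3) can be replaced by the counterexample.
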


    We are now ready to characterize disjoint universality for pairs of sequences $(C_{\phi_{1, n}})_n$, $(C_{\phi_{2, n}})_n$ whose symbols are converging to the boundary. We notice that the separation of $\phi_{1, n}(0)$ and $\phi_{2, n}(0)$ through the pseudo-hyperbolic distance at condition (3) is reasonable since, for disjoint universality to have a chance, there should be a condition ensuring that  $\phi_{1, n}\neq \phi_{2, n}$ eventually, at least over a subsequence.
    
    \begin{theorem}\label{Blaschke}
        Let $(\phi_{1, n})_{n=1}^{\infty}$ and $(\phi_{2, n})_{n=1}^{\infty}$ be sequences of holomorphic self maps of $\mathbb{D}$ and assume that $\phi_{1, n}(0)\rightarrow a$ and $\phi_{2, n}(0)\rightarrow b$, where $a, b\in \mathbb{T}$. The following assertions are equivalent.
        \begin{enumerate}
            \item The sequences $(C_{\phi_{1, n}})_n$, $(C_{\phi_{2, n}})_n$ admit a $\mathcal{B}$-universal function; 
            \item the sequences $(C_{\phi_{1, n}})_n$, $C_{\phi_{2, n}})_n$ admit a $\mathcal{B}$-universal Blaschke product;
            \item there is a subsequence $(n_k)$ such that
            $$
            \lim_{k\rightarrow \infty}\frac{|\phi_{1, n_k}'(0)|}{1-|\phi_{1, n_k}(0)|^2}=\lim_{k\rightarrow \infty}\frac{|\phi_{2, n_k}'(0)|}{1-|\phi_{2, n_k}(0)|^2}=1
            $$
            and
            $$
            \rho(\phi_{1, n_k}(0),\phi_{2, n_k}(0))\rightarrow 1.
            $$
            \end{enumerate}
    \end{theorem}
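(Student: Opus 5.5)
The plan is to prove $(2)\Rightarrow(1)$ (trivial), then $(1)\Rightarrow(3)$, then $(3)\Rightarrow(2)$ via Theorem \ref{criterion}.

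\emph{$(1)\Rightarrow(3)$.} Let $f\in\mathcal{B}$ be disjointly universal. Pick a subsequence $(n_k)$ with $f\circ\phi_{1,n_k}\to \mathrm{id}$ and $f\circ\phi_{2,n_k}\to \mathrm{id}$ locally uniformly; this is possible because the pair $(\mathrm{id},\mathrm{id})$ lies in the closure of the orbit. By Fact \ref{fact}(3),(4) we get
$$
1=\lim_k (f\circ\phi_{i,n_k})^\#(0)\le \liminf_k \phi_{i,n_k}^\#(0)\le 1 ,
$$
which gives the hyperbolic derivative condition. The separation is harder, since the same subsequence must also work for it. Instead I would choose targets $(z,-z)$ and a subsequence with $f\circ\phi_{1,n_k}\to z$ and $f\circ\phi_{2,n_k}\to -z$. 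The argument above again gives $\phi_{i,n_k}^\#(0)\to1$. Suppose, after passing to a further subsequence, that $\rho(\phi_{1,n_k}(0),\phi_{2,n_k}(0))\le r<1$. Write $\phi_{2,n_k}(0)=\tau_k(w_k)$ with $\tau_k(z)=\frac{\phi_{1,n_k}(0)+z}{1+\overline{\phi_{1,n_k}(0)}z}$ and $|w_k|\le r$. Since $\phi_{1,n_k}^\#(0)\to1$, the normalized maps $\tau_k^{-1}\circ\phi_{1,n_k}$ are close to rotations on compacta. Then $g_k:=f\circ\tau_k$ satisfies $g_k\circ(\tau_k^{-1}\circ\phi_{1,n_k})\to z$, so $g_k$ converges, along a subsequence, to some $g$ with $g(\lambda u)=u$ for a unimodular $\lambda$. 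Evaluating $g$ at $w_k$ (which stays in a compact set) against $f\circ\phi_{2,n_k}(0)\to0$, and more generally on a neighborhood, forces $g$ to agree near a point with both $z\mapsto \bar\lambda z$ and (after similar normalization of $\phi_{2,n_k}$) $z\mapsto -\bar\mu\, \psi(z)$ for an automorphism $\psi$. I expect this to yield a contradiction by comparing both sides as automorphisms, though the argument needs care. A cleaner alternative, which I would try first, is to use the targets $(0,\mathrm{id})$: Schwarz--Pick then gives
$$
\rho\bigl(f\phi_{1,n_k}(0),f\phi_{2,n_k}(0)\bigr)\le\rho\bigl(\phi_{1,n_k}(0),\phi_{2,n_k}(0)\bigr),
$$
which alone is not enough, so the renormalized limit argument is needed. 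This step is the main obstacle of the whole proof.

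\emph{$(3)\Rightarrow(2)$.} Work in the multiplicative semigroup $\mathcal{B}$ with $e=1$. Take $D_0$ to be the finite Blaschke products with zeros in $\mathbb{D}$, so that $T_{i,n_k}x\to1$ when $\phi_{i,n_k}\to a,b$ locally uniformly. This convergence follows from $\phi^\#\le1$: points near $\phi(0)$ in the hyperbolic metric go to the boundary. Moreover, by Carathéodory--Fejér density, finite Blaschke products (times unimodular constants) are dense in $\mathcal{B}$. Let $D_1=D_2$ be the set of finite Blaschke products together with unimodular constants. Set
$$
S_{i,n_k}x=x\circ\alpha_{i,k}, \qquad \alpha_{i,k}(z)=\lambda_{i,k}\,\frac{z-\phi_{i,n_k}(0)}{1-\overline{\phi_{i,n_k}(0)}z},
$$
where $\lambda_{i,k}$ is chosen so that $\alpha_{i,k}\circ\phi_{i,n_k}$ has positive derivative at $0$. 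By Fact \ref{fact}(2),(3) and normality, $\alpha_{i,k}\circ\phi_{i,n_k}\to\mathrm{id}$ locally uniformly, so $T_{i,n_k}S_{i,n_k}x\to x$. Conditions (1),(2) of Theorem \ref{criterion} hold because $\alpha_{i,k}^{-1}(0)\to\mathbb{T}$, so $x\circ\alpha_{i,k}$ tends to the unimodular constant $x$ evaluated at the boundary limit. The zeros of $x$ must be handled by absorbing constants; I would choose $D_i$ with constant limit $1$ by normalizing with the factor $\overline{x(\cdot)}$. The cross term is
$$
\alpha_{j,k}\circ\phi_{i,n_k}(0)=\lambda_{j,k}\,\frac{\phi_{i,n_k}(0)-\phi_{j,n_k}(0)}{1-\overline{\phi_{j,n_k}(0)}\phi_{i,n_k}(0)},
$$
whose modulus is $\rho(\phi_{i,n_k}(0),\phi_{j,n_k}(0))\to1$. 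By Fact \ref{fact}(1), $\alpha_{j,k}\circ\phi_{i,n_k}$ then maps compacta toward $\mathbb{T}$, so $x\circ\alpha_{j,k}\circ\phi_{i,n_k}\to$ const. Theorem \ref{criterion} then yields a universal element $\prod x_i$ with $x_i\in D_0$. To ensure that this product is an infinite Blaschke product, one shows, following the appendix construction, that the partial products converge and the zeros satisfy the Blaschke condition. A nonvanishing constant limit requires the normalizations above, which I would check as a secondary technical point.
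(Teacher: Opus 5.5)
\textbf{Gap 1: separation in $(1)\Rightarrow(3)$.} Your argument does not prove $\rho(\phi_{1,n_k}(0),\phi_{2,n_k}(0))\to1$, and the $(z,-z)$ route cannot prove it. Take automorphisms $\phi_{1,n}$ with $\phi_{1,n}(0)\to a$, and set $\phi_{2,n}(z)=\phi_{1,n}(-z)$. Then $\rho(\phi_{1,n}(0),\phi_{2,n}(0))=0$ for all $n$. Yet for a Blaschke product $B$ universal for $(C_{\phi_{1,n}})_n$ (Heins), some subsequence has $B\circ\phi_{1,n_k}\to z$, and then automatically $B\circ\phi_{2,n_k}\to -z$. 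Consistently, your renormalized limits only give $g(\lambda u)=u$ and $g(\psi(u))=-u$, i.e.\ $\psi(u)=-\lambda u$, which is no contradiction.

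The missing idea is to choose, on one and the same subsequence, a target for $f\circ\phi_{1,n_k}$ with two properties. Its value at $0$ must tend to $\mathbb{T}$, so that Schwarz--Pick forces separation against the target $\mathrm{id}$ for $f\circ\phi_{2,n_k}$. It must also be an automorphism, so that the hyperbolic derivative is still controlled. The paper uses $\tau_k(z)=\frac{z+a_k}{1+\overline{a_k}z}$ with $a_k\to a$ and picks $n_k$ with $\|f\circ\phi_{1,n_k}-\tau_k\|_{K_k}<\delta_k$ and $\|f\circ\phi_{2,n_k}-\mathrm{id}\|_{K_k}<\delta_k$. Here $\delta_k$ comes from continuity of $g\mapsto g^\#(0)$ at $\tau_k$; it must depend on $k$, because the limit of the $\tau_k$ is the unimodular constant $a$. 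Then Fact \ref{fact}(4) gives $\phi_{i,n_k}^\#(0)\to1$ for $i=1,2$. Fact \ref{fact}(1) gives $\rho(\phi_{1,n_k}(0),\phi_{2,n_k}(0))\ge\rho(f\circ\phi_{1,n_k}(0),f\circ\phi_{2,n_k}(0))\to\rho(a,0)=1$.

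\textbf{Gap 2: the dense sets in $(3)\Rightarrow(2)$.} Your operators $S_{i,n_k}=C_{\alpha_{i,k}}$ are essentially the paper's $C_{\overline{\lambda}u_{i,n}}$. Your $k$-dependent rotation is fine, since $\alpha_{i,k}\circ\phi_{i,n_k}\to\mathrm{id}$ by Schwarz's lemma. However, your dense sets violate the hypotheses of Theorem \ref{criterion}. For a finite Blaschke product $x$ one has $T_{1,n_k}x\to x(a)$ and $T_{2,n_k}x\to x(b)$. Likewise $S_{i,n_k}x$ and $T_{j,n_k}S_{i,n_k}x$ tend to values of $x$ at boundary points. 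These limits equal the identity $1$ only if $x$ takes the value $1$ there. Multiplying by a constant such as $\overline{x(p)}$ does not help, because it destroys $T_{i,n_k}S_{i,n_k}x\to x$.

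The fix is as follows. First pass to a subsequence along which $\alpha_{i,k}(0)\to p_i\in\mathbb{T}$ and $\alpha_{i,k}\circ\phi_{j,n_k}\to q_i\in\mathbb{T}$ for $j\neq i$. The latter limit is a unimodular constant by normality and the maximum principle, since $|\alpha_{i,k}\circ\phi_{j,n_k}(0)|=\rho(\phi_{1,n_k}(0),\phi_{2,n_k}(0))\to1$. Without this step, your ``$\to$ const'' hides a constant that varies with $k$. Then take $D_0$ to be the finite Blaschke products with $x(a)=x(b)=1$, and $D_i$ those with $x(p_i)=x(q_i)=1$. The density of these sets in $\mathcal{B}$ is a genuine extra ingredient (Walmsley's Lemma 3.9, cited in the paper). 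It needs an argument beyond plain density of finite Blaschke products.

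With these sets in place, the element $\prod x_i$ produced by the criterion is a nonzero locally uniform limit of finite Blaschke products with nested zero sets. It is therefore a Blaschke product, as you anticipate.
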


    \begin{rem}
        We observe that when $a\neq b$, $\rho(\phi_{1, n}(0),\phi_{2, n}(0))\rightarrow 1$, as $n\rightarrow \infty$, holds automatically. Hence condition (3) can be also be stated as
        \begin{enumerate}
            \item[($3$)] if $a\neq b$, there is a subsequence $(n_k)$ such that
            $$
            \lim_{k\rightarrow \infty}\frac{|\phi_{1, n_k}'(0)|}{1-|\phi_{1, n_k}(0)|^2}=\lim_{k\rightarrow \infty}\frac{|\phi_{2, n_k}'(0)|}{1-|\phi_{2, n_k}(0)|^2}=1.
            $$
            If $a=b$ it also holds that
            $$
            \rho(\phi_{1, n_k}(0),\phi_{2, n_k}(0))\rightarrow 1.
            $$
        \end{enumerate}
    \end{rem}

    \begin{pf}
        That $(2)\Rightarrow (1)$ is clear. We will show that $(1)\Rightarrow (3)$ and that $(3)\Rightarrow (2)$.

        $(1) \Rightarrow (3)$. Let $(a_k)\subset \mathbb{D}$ be such that $a_k\rightarrow a$. Set for $k\geq 1$,
        $$
        \tau_k(z)=\frac{z+a_k}{1+\overline{a_k}z}
        $$
        the non-Euclidean translation that maps $0$ to $a_k$. By Fact \ref{fact} (3), we get an increasing exhaustion $(K_k)$ of $\mathbb{D}$ by compact sets, and a sequence $(\delta_k)$ of positive numbers such that $\delta_k\rightarrow 0$, satisfying that
        $$
        |g^\#(0)-\tau_k^\#(0)|<\frac{1}{k}, \quad \text{if} \quad \|g-\tau_k\|_{K_k}<\delta_k.
        $$
        If $f$ is a disjointly universal vector for $(C_{\phi_{1, n}},C_{\phi_{2, n}})$ and if $i(z)=z$ is the identity function, there exists a subsequence $(n_k)$ such that
        $$
        \|f\circ \phi_{1, n_k}-\tau_k\|_{K_k}<\delta_k
        $$
        and
        $$
        \|f\circ \phi_{2, n_k}-i\|_{K_k}<\delta_k.
        $$
        Since $\tau_k\rightarrow a$ on compact subsets, we get that $f\circ \phi_{1, n_k}\rightarrow a$. Also, since $f\circ \phi_{2, n_k}\rightarrow i$, it follows that 
        $$
        (f\circ \phi_{2, n_k})^\#(0)\rightarrow i^\#(0)=1
        $$ 
        which, together with the fact that 
        $$
        (f\circ \phi_{2, n_k})^\#(0)\leq \phi_{2, n_k}^\#(0)\leq 1,
        $$ 
        yields that $\phi_{2, n_k}^\#(0)\rightarrow 1$.
        Concerning the pseudo-hyperbolic distances, we have that
        $$
        \rho((f\circ \phi_{1, n_k})(0),(f\circ \phi_{2, n_k})(0))\rightarrow \rho(a,0)=1
        $$
        and since
        $$
        \rho((f\circ \phi_{1, n_k})(0),(f\circ \phi_{2, n_k})(0))\leq \rho(\phi_{1, n_k}(0),\phi_{2, n_k}(0))\leq 1,
        $$
        we conclude that 
        $$
        \rho(\phi_{1, n_k}(0),\phi_{2, n_k}(0))\rightarrow 1.
        $$
        Finally, we have that
        $$
        |(f\circ \phi_{1, n_k})^\#(0)-1|=|(f\circ \phi_{1, n_k})^\#(0)-\tau_k^\#(0)|<\frac{1}{k}
        $$
        hence, 
        $$
        (f\circ \phi_{1, n_k})^\#(0)\rightarrow 1.
        $$
        Since
        $$
        (f\circ \phi_{1, n_k})^\#(0)\leq \phi_{1, n_k}^\#(0)\leq 1
        $$
        it follows that $\phi_{1, n_k}^\#(0)\rightarrow 1$.

        For $(3)\Rightarrow (2)$ we apply Theorem \ref{criterion}. Using the fact that for each $T^{(i)}_n=C_{\phi^{(i)}_n}, i=1,2$ we can define a corresponding $S^{(i)}_n$, we can simplify the argument from \cite[Theorem 2.1]{BGGM09} and avoid using thin sequences and thin Blaschke products. For each $n\in \mathbb{N}$, we set
        $$
        u_{1, n}(z)=\frac{z-\phi_{1, n}(0)}{1-\overline{\phi_{1, n}(0)}z},\quad z\in \mathbb{D}
        $$
        and 
        $$
         u_{2, n}(z)=\frac{z-\phi_{2, n}(0)}{1-\overline{\phi_{2, n}(0)}z},\quad z\in \mathbb{D}.
        $$
        It is clear that
         $$
        u_{1, n}\rightarrow -a
        $$
        and that
        $$
        u_{2, n}\rightarrow -b.
        $$
        Since
        $$
        \rho(\phi_{1,n_k}(0),\phi_{2,n_k}(0))=|u_{1, n_k}\circ \phi_{2, n_k}(0)|=|u_{2, n_k}\circ \phi_{1, n_k}(0)|\rightarrow 1,
        $$
        by passing to a subsequence if necessary, we may assume that for some $c, d\in \mathbb{T}$,
        $$
        u_{1, n_k}\circ \phi_{2, n_k}\rightarrow c,
        $$
        and
        $$
        u_{2, n_k}\circ \phi_{1, n_k}\rightarrow d.
        $$
        
        By a normal families argument, and by passing to a further subsequence if necessary, we may assume that
        $$
        u_{1, n_k}\circ \phi_{1, n_k}\rightarrow f,
        $$
        for some $f\in \mathcal{B}$. Since $(u_{1, n_k}\circ \phi_{1, n_k})(0)=0$, for each $k\in \mathbb{N}$, it holds that $f(0)=0$. Since
        $$
        |f'(0)|=\lim_{k\rightarrow \infty}\frac{|\phi_{1, n_k}'(0)|}{1-|\phi_{1, n_k}(0)|^2}=1,
        $$
        Schwarz's Lemma implies that $f(z)=\lambda z$, for some $\lambda \in \mathbb{T}$, that is
        $$
        (u_{1, n_k}\circ \phi_{1, n_k})(z)\rightarrow \lambda z
        $$
        uniformly on compact subsets.
        By passing to a further subsequence if necessary, and by repeating the above argument we get a $\mu \in \mathbb{T}$ such that
        $$
        (u_{2, n_k}\circ \phi_{2, n_k})(z)\rightarrow \mu z
        $$
        uniformly on compact subsets.
        We now consider the sets
        $$
        D_0=\{B \quad \text{finite Blaschke product}: B(a)=B(b)=1\},
        $$
        $$
        D_1=\{B \quad \text{finite Blaschke product}: B(-a\overline{\lambda})=B(c\overline{\lambda})=1\},
        $$
        and
        $$
        D_2=\{B \quad \text{finite Blaschke product}: B(-b\overline{\mu})=B(d\overline{\mu})=1\}.
        $$
        By \cite[Lemma 3.9]{Dave} we know that $D_0, D_1$ and $D_2$ are dense in $\mathcal{B}$. 
        We finally define $S_{1, n}: D_1\rightarrow \mathcal{B}$ by
        $$
        S_{1, n}=C_{\overline{\lambda}u_{1, n}}
        $$
        and $S_{2, n}: D_2\rightarrow \mathcal{B}$ by
        $$
        S_{2, n}=C_{\overline{\mu}u_{2, n}}.
        $$
        It is clear that 
        $$
        T_{1,n_k}(B)\rightarrow 1
        $$ 
        and that 
        $$
        T_{2,n_k}(B)\rightarrow 1$$ 
        in $\mathcal{B}$, for each $B\in D_0$, since $C_{\phi_{1, n_k}}(B)(0)=B(\phi_{1, n_k}(0))\rightarrow B(a)=1$ and $C_{\phi_{2, n_k}}(B)(0)=B(\phi_{2, n_k}(0))\rightarrow B(b)=1$. 
       We also have that 
       $$
       S_{1, n_k}(B)\rightarrow 1,
       $$ 
       for $B\in D_1$, since $C_{\overline{\lambda}u_{1, n_k}}(B)(0)=B(\overline{\lambda}u_{1, n_k}(0))\rightarrow B(-a\overline{\lambda})=1$. Similarly, we get that 
       $$
       S_{2, n_k}(B)\rightarrow 1,
       $$ 
       for $B\in D_2$. 
        It also holds that 
        $$
T_{1, n_k}S_{1, n_k}(B)\rightarrow B,
        $$ 
        for every $B\in D_1$, since $C_{\phi_{1, n_k}}C_{\overline{\lambda}u_{1, n_k}}(B)(z)=B(\overline{\lambda}u_{1, n_k}\circ \phi_{1, n_k})(z)\rightarrow B(\overline{\lambda}\lambda z)= B(z)$ and since pointwise convergence implies uniform convergence on compact subsets on $\mathcal{B}$. Similarly, we get that
        $$
T_{2, n_k}S_{2, n_k}(B)\rightarrow B,
        $$
        for every $B\in D_2$. Finally, we have that
        $$
T_{1, n_k}S_{2, n_k}(B)\rightarrow 1, 
        $$
        for $B\in D_2$, since $C_{\phi_{1, n_k}}C_{\overline{\mu}u_{2, n_k}}(B)(z)=B(\overline{\mu}u_{2, n_k}\circ \phi_{1, n_k})(z)\rightarrow B(d\overline{\mu})=1$. Similarly,
        $$
T_{2, n_k}S_{1, n_k}(B)\rightarrow 1,
        $$
        for $B\in D_1$. Theorem \ref{criterion} now provides the existence of a disjointly $\mathcal{B}$- universal vector for $(C_{\phi_{1, n}},C_{\phi_{2, n}})$ of the form 
        $$
        \prod_{i=1}^{\infty}B_j, \quad B_j\in D_1
        $$
        which is a Blaschke product and hence way may conclude.
        \end{pf}

    \begin{rem}
        If we wanted to align Theorem \ref{criterion} to \cite[Theorem 2.1]{BGGM09}, we could have added the following equivalent condition.
        \begin{enumerate}
            \item[(4)] For each $m\geq 1$, the set $\mathcal{B}_m=\{(u\circ\phi_{1, n},u\circ \phi_{2, n}): u\in \mathcal{B}, n\geq m\}$ is dense in $\mathcal{B}$.
        \end{enumerate}
        That $(1)\Rightarrow(4)$ is clear since for a disjointly universal vector $f$ for $(C_{\phi_{1, n}},C_{\phi_{2, n}})$, and for any $m\geq 1$, we have that the set $\{(f\circ \phi_{1, n},f\circ \phi_{2, n}): n\geq m\}$ is dense in $\mathcal{B}$ and contained in $\mathcal{B}_m$. To see that $(4)\Rightarrow (3)$ we need to repeat step by step the proof of $(1)\Rightarrow (3)$, substituting the disjointly universal vector $f$ by $u_k \in \mathcal{B}$ satisfying that $\|u_k\circ \phi_{1, n_k}-\tau_k\|_{K_k}<\delta_k$, for a subsequence $(n_k)$, and that $\|u_k\circ \phi_{2, n_k}-i\|_{K_k}<\delta_k$. We did not include this condition in the theorem since we do not use it in the paper.
    \end{rem}


    Our first corollary refers to disjoint universality in the case that the two sequences of operators are sequences of iterates of two operators, or equivalently in the case of composition operators, when the sequences of symbols are iterates of two self-maps of the unit disc. To simplify the notation, for the following result we will denote the $n$-th iterate of a self-map of the unit disc $\phi$ by $\phi^n$ when $n\geq 1$. We will set $\phi^0(z)=z$, and in case $\phi$ is invertible, we will as usually denote $\phi^{-n}=(\phi^{-1})^n$, for $n\geq 1$. If $\phi$ is a holomorphic self-map of the unit disc, it is plain to see that the iterates of the composition operator with symbol $\phi$ are related to the iterates of its symbol through
    $$
    C_{\phi}^n=C_{\phi^n}, n\geq 0.
    $$
    The following is the disjoint analogue of \cite[Corollary 4.1]{BGGM09}. For the classification of disc automorphisms and their dynamics, we refer to \cite{Sh}.

    \begin{corollary}
        Let $\phi_1$ and $\phi_2$ be holomorphic self maps of $\mathbb{D}$. The sequences $(C_{\phi_1^n})_n$, $(C_{\phi_2^n})_n$ admit a disjointly universal Blaschke product if and only if $\phi_1\neq \phi_2$, both $\phi_1$ and $\phi_2$ are either parabolic or hyperbolic automorphisms of the disc (not necessarily of the same type), and in case $\phi_1$ and $\phi_2$ are both of hyperbolic type and they have the same attractive point, their multipliers are different.  
    \end{corollary}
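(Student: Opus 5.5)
The plan is to apply Theorem \ref{Blaschke} with $\phi_{i,n}=\phi_i^n$, $i=1,2$. First I reduce to the case where $\phi_1^n(0)$ and $\phi_2^n(0)$ converge to points of $\mathbb{T}$. Then I check condition (3) of that theorem by an explicit computation in the upper half-plane.

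\emph{Necessity of the automorphism condition.} Disjoint universality implies universality of each sequence $(C_{\phi_i^n})_n$ separately, so we may argue for one map $\phi=\phi_i$ at a time.
\begin{enumerate}
\item Suppose $\phi$ has a fixed point in $\mathbb{D}$. This covers the elliptic automorphisms and the non-automorphic maps whose Denjoy--Wolff point is interior. Then the orbit $\{\phi^n(0)\}$ stays in a compact set $K\subset\mathbb{D}$. For a universal $f$, the values $f(\phi^n(0))$ would have to approximate every point of the closed disc; since $\|f\|_K<1$ for non-constant $f\in\mathcal{B}$, this is impossible.
\item Suppose $\phi$ has no fixed point in $\mathbb{D}$. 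By Denjoy--Wolff, $\phi^n(0)\to a_i\in\mathbb{T}$, so the hypotheses of Theorem \ref{Blaschke} hold.
\item If in addition $\phi\notin Aut(\mathbb{D})$, write $\phi^n=\phi^{n-1}\circ\phi$. Fact \ref{fact}(4) gives $(\phi^n)^\#(0)\le \phi^\#(0)<1$ for all $n$, with the strict inequality by Fact \ref{fact}(2). This contradicts the hyperbolic derivative part of condition (3).
\end{enumerate}
Hence both maps are parabolic or hyperbolic automorphisms. Conversely, for automorphisms $(\phi_i^n)^\#(0)=1$ for every $n$, so the derivative part of (3) is automatic. Everything therefore reduces to the pseudo-hyperbolic condition $\rho(\phi_1^{n_k}(0),\phi_2^{n_k}(0))\to1$ along some subsequence.

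\emph{The separation condition.} If $\phi_1=\phi_2$, then $\rho=0$ for all $n$, so $\phi_1\neq\phi_2$ is necessary. If the attractive (Denjoy--Wolff) points differ, the Remark after Theorem \ref{Blaschke} gives $\rho\to1$ automatically. It remains to treat a common attractive point $a$. I would conjugate by a Cayley map sending $a$ to $\infty$ and $0$ to $i$; $\rho$ is conformally invariant, and in the upper half-plane $\rho(z,w)=|z-w|/|z-\overline{w}|$. In these coordinates:
\begin{itemize}
\item a parabolic map is $z\mapsto z+t$ with $t\in\mathbb{R}\setminus\{0\}$;
\item a hyperbolic map with attractive point $\infty$ is $z\mapsto x_0+k(z-x_0)$ with $k>1$. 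Here $x_0$ is the repelling point and $1/k=\phi'(a)$ is the multiplier at $a$.
\end{itemize}
The orbits of $i$ are then explicit: $i+nt$ and $x_0+k^n(i-x_0)$, the latter having imaginary part $k^n$. The three cases are:
\begin{enumerate}
\item \emph{Parabolic/parabolic.} We get $\rho=|n(t_1-t_2)|/|n(t_1-t_2)+2i|$. This tends to $1$ exactly when $t_1\neq t_2$, that is, when $\phi_1\neq\phi_2$.
\item \emph{Parabolic/hyperbolic.} The imaginary parts are $1$ and $k^n\to\infty$, and this ratio forces $\rho\to1$.
\item \emph{Hyperbolic/hyperbolic.} If $k_1\neq k_2$, the ratio of imaginary parts $(k_1/k_2)^n$ tends to $0$ or $\infty$, so $\rho\to1$. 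If $k_1=k_2=k$, the difference of the two points is the real number $(x_1-x_0)(k^n-1)$, while both imaginary parts equal $k^n$. Then $\rho^2=\dfrac{(x_1-x_0)^2(k^n-1)^2}{(x_1-x_0)^2(k^n-1)^2+4k^{2n}}$, which converges to $\dfrac{(x_1-x_0)^2}{(x_1-x_0)^2+4}<1$. So no subsequence satisfies (3).
\end{enumerate}
Combining these cases with Theorem \ref{Blaschke} yields the stated equivalence.

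The main obstacle is the necessity direction for non-automorphic or elliptic maps. There one must use the argument inside the proof of $(1)\Rightarrow(3)$ (or \cite[Corollary 4.1]{BGGM09}) rather than the theorem itself, because the theorem presupposes $\phi_i^n(0)\to\mathbb{T}$. The rest is the elementary half-plane computation.
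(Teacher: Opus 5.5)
Your proof is correct and has the same skeleton as the paper's. Both reduce to Theorem~\ref{Blaschke}, note that $(\phi_i^n)^\#(0)=1$ for automorphisms, and decide whether $\rho(\phi_1^{n}(0),\phi_2^{n}(0))\to 1$ by computing in the upper half-plane. The execution differs in two places.

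\textbf{Necessity of the automorphism type.} The paper simply invokes Corollary 4.1 of Bayart et al.\ for each sequence separately. You give a self-contained argument instead. An interior fixed point traps the orbit of $0$ in a compact set, where a nonconstant $f\in\mathcal{B}$ has modulus bounded away from $1$; a constant $f$ is trivially not universal, which you should state. A non-automorphic $\phi$ satisfies $(\phi^n)^\#(0)\le\phi^\#(0)<1$ by Fact~\ref{fact}(2),(4). This is incompatible with the derivative part of condition (3), once the first step has been applied to both maps so that Theorem~\ref{Blaschke} is available.

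\textbf{The separation condition.} The paper writes $\rho(\phi_1^n(0),\phi_2^n(0))=|(\phi_1^{-n}\circ\phi_2^n)(0)|$ and splits the analysis into two parts.
\begin{itemize}
\item Commuting maps: here $\phi_1^{-1}\circ\phi_2$ is a non-identity automorphism fixing $a$, so its iterates push $0$ to $\mathbb{T}$ without any coordinates.
\item The two non-commuting configurations: here the paper computes $f_1^{-n}\circ f_2^n$ in $\mathbb{H}$, with the repelling point of $\phi_1$ placed at $0$.
\end{itemize}
You instead normalize $0\mapsto i$ and evaluate $|z-w|/|z-\overline{w}|$ directly on the explicit orbits, organized by type pairs. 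This treats every configuration with one formula and even gives the exact limit of $\rho^2$ in the equal-multiplier case. The price is doing coordinate work in the cases the paper dispatches by the commuting argument.

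One step deserves a line of justification: the claim that a divergent ratio of imaginary parts forces $\rho\to1$. It follows from
$1-\rho^2=4\,\mathrm{Im}z\,\mathrm{Im}w/|z-\overline{w}|^2\le 4y_1y_2/(y_1+y_2)^2$.
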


    \begin{pf}
        The necessity of $\phi_1\neq \phi_2$ follows immediately from the fact that a sequence of maps is never disjointly universal with itself, while the necessity of both $\phi_1$ and $\phi_2$ being parabolic or hyperbolic automorphisms follows from \cite[Corollary 4.1]{BGGM09} and the observation that disjoint universality of a pair of sequences of maps, implies universality for each one of them. We also notice that since the iterates of disc automorphisms are also disc automorphisms, we have that $(\phi_1^n)^\#(0)=(\phi_2^n)^\#(0)=1$, for every $n\geq 0$. 

        Assuming that $\phi_1$ and $\phi_2$ are disc automorphisms of either parabolic or hyperbolic type, there are $a,b\in \mathbb{T}$ (the Denjoy-Wolff points) such that
        $$
        \phi_1^n(0)\rightarrow a,
        $$
        and
        $$
        \phi_2^n(0)\rightarrow b.
        $$
        In order to finalize the proof, Theorem \ref{criterion} suggests that we need to characterize when $\rho(\phi_1^n(0),\phi_2^n(0))\rightarrow 1$, at least over a subsequence.
        
        If $a\neq b$ Theorem \ref{criterion} applies right away for the whole sequence $(n)$ and we are done. If $a=b$ we notice that
        $$
        \rho(\phi_1^n(0),\phi_2^n(0))=|(\phi_1^{-n}\circ \phi_2^n)(0)|.
        $$
        If $\phi_1$ and $\phi_2$ commute, which happens exactly when they have the same fixed points, we get that
        $$
        \rho(\phi_1^n(0),\phi_2^n(0))=|(\phi_1^{-1}\circ \phi_2)^n(0)|.
        $$
        Since $\phi_1\neq \phi _2$, it holds that $\phi_1^{-1}\circ \phi_2$ is not the identity map. Furthermore, since $a$ is a common fixed point for $\phi_1$ and $\phi_2$, it is also a fixed point for $\phi_1^{-1}\circ \phi_2$. That means that the latter is an automorphism of the disc of either parabolic or hyperbolic type and hence
        $$
        |(\phi_1^{-1}\circ \phi_2)^n(0)|\rightarrow 1.
        $$
        Now Theorem \ref{criterion} applies for the whole sequence $(n)$ and concludes the proof. 

        If $\phi_1$ and $\phi_2$ do not commute, then since on one hand they do not have the same fixed points, and on the other hand they share the attractive fixed point $a\in \mathbb{T}$, we have two cases which we consider separately.

        Let us first assume that $\phi_1$ is of hyperbolic type with attractive fixed point at $a$, and $\phi_2$ is of parabolic type having $a$ as its unique fixed point. Let $\tau$ be the linear fractional transformation mapping $\mathbb{D}$ to the upper half plane $\mathbb{H}=\{z\in \mathbb{C}: \text{Im}(z)>0\}$ and sending $a$ to $\infty$ and the repelling point of $\phi_1$ to $0$. Then, there are $\lambda >1$ and $c\in \mathbb{R}$, such that if
        $$
        f_1(z)=\lambda z
        $$
        and 
        $$
        f_2(z)=z+c,
        $$
        $\phi_1=\tau^{-1}\circ f_1\circ \tau$ and $\phi_2=\tau^{-1}\circ f_2\circ \tau$. Since for $n\in \mathbb{N}$ and $z\in \mathbb{H}$,
        $$
        f_1^{-n}\circ f_2^n(z)=\frac{z+nc}{\lambda ^n}\rightarrow 0,
        $$
        we get that 
        $$
        \rho(\phi_1^n(0),\phi_2^n(0))=|(\tau^{-1}\circ f_1^{-n}\circ f_2^n \circ \tau)(0)|\rightarrow 1.
        $$

        If $\phi_1$ and $\phi_2$ are both of hyperbolic type, with the same attractive point $a$ and different repelling points, then if $\psi$ is the linear fractional transformation that maps $\mathbb{D}$ to $\mathbb{H}$, $a$ to $\infty$ and the repelling point of $\phi_1$ to $0$, there are $\gamma_1, \gamma_2>1$ and $x_0\in \mathbb{R}\setminus \{0\}$, such that if
        $$
        f_1(z)=\gamma_1 z
        $$
        and 
        $$
        f_2(z)=\gamma_2(z-x_0)+x_0,
        $$
        $\phi_1=\psi^{-1}\circ f_1\circ \psi$, and $\phi_2=\psi^{-1}\circ f_2\circ \psi$. A simple calculation shows that if $n\in \mathbb{N}$ and $z\in \mathbb{H}$, then
        $$
        f_1^{-n}\circ f_2^n(z)=\gamma_1^{-n}\gamma_2^n(z-x_0)+\gamma_1^{-n}x_0\rightarrow 
        \begin{cases}
            &0 \quad \text{if} \quad \gamma_1>\gamma_2,\\
            &\infty, \quad \text{if} \quad \gamma_1<\gamma_2,\\
            &z-x_0, \quad \text{if} \quad \gamma_1=\gamma_2.
        \end{cases}
        $$
        That yields that 
        $$
        \rho(\phi_1^n(0),\phi_2^n(0))=|(\psi^{-1}\circ f_1^{-n}\circ f_2^n \circ \psi)(0)|\rightarrow 1
        $$
        if and only if $\gamma_1 \neq \gamma_2$ which means if and only if the multipliers of $\phi_1$ and $\phi_2$ are different.
    \end{pf}

    Next, we would like to give some concrete applications of Theorem \ref{criterion} for the case that the symbols are not automorphisms of the disc. The first example is in the spirit of \cite[Example 4.6]{BGGM09} given a quantified flavor. 

    \begin{example}
      Let $(r_n)\subset (0,1)$ such that $r_n\rightarrow 1$, and $(a_n)\subset \mathbb{D}$ such that $a_n\rightarrow a\in \mathbb{T}$. For $n\in \mathbb{N}$, we let
      $$
      \phi_n(z)=\frac{r_nz+a_n}{1+r_n\overline{a_n}z}
      $$
      which is the composition of a dilation followed by an automorphism of the disc. Clearly, $\phi_n$ is not surjective. A direct calculation shows that
      $$
      |\phi_n'(0)|=r_n(1-|a_n|^2)=r_n(1-|\phi_n(0)|^2)
      $$
      hence, $\phi_n^\#(0)=r_n\rightarrow 1$. By \cite[Theorem 2.1]{BGGM09}, $(C_{\phi_n})$ has a universal Blaschke product. Keeping the same sequence $(r_n)$ and changing the sequence $(a_n)$ so that it converges to a different point of the unit circle, we get another sequence of self-maps of the disc, say $\psi_n$. Now Theorem \ref{criterion} applies, and we get examples of disjointly universal Blaschke products for the sequence $(C_{\phi_n},C_{\psi_n})$.
    \end{example}

    In the previous example, the sequence of symbols was still injective. In the following example we violate injectivity. The idea should be clear by now. Condition (3) of Theorem \ref{criterion} forces the symbols to become ``almost" automorphisms of the disc on some exhaustion of $\mathbb{D}$ with compact sets (over a subsequence). However, one still gets a diminishing amount of space to violate injectivity. We are again addressing first universality and then the transition to disjoint universality becomes easy.

    \begin{example}
     We first consider a sequence of self-maps of the disc $(\phi_n)$ such that $\phi_n(0)\rightarrow a \in \mathbb{T}$ and satisfying that $\phi_n^\#(0)\rightarrow 1$. Let $(K_n)$ be an increasing exhaustion of $\mathbb{D}$ with compact sets, and $(\delta_n)$ a sequence of positive numbers such that $\delta_n \rightarrow 0$ and satisfying that for $f\in \mathcal{B}$,
     $$
     \|f-\phi_n\|_{K_n}<\delta_n \Rightarrow |f^\#(0)-\phi_n^\#(0)|<\frac{1}{n}.
     $$
     We notice that the above is possible using the continuity of the map $h\mapsto h^\#(0)$ at $\phi_n$. We want, for each $n\geq 1$, to pick a non-injective function $f_n\in \mathcal{B}$ satisfying that $\|f_n-\phi_n\|_{K_n}<\delta_n$. One way to achieve this is the following. We fix $z_1\neq z_2$ on $\mathbb{T}$ and for each $n\geq 1$, we pick a finite Blaschke product $B_n$ satisfying that $B_n(z_1)=B_n(z_2)$ and that $\|B_n-\phi_n\|_{K_n}<\delta_n$. By substituting $B_n$ by $\|B_n\|^{-1}_{r\overline{\mathbb{D}}}B_n(rz)$, $z\in \mathbb{D}$ for some $r>1$ sufficiently close to $1$, we may assume that $B_n$ is not injective in $\mathbb{D}$ (it does not need to be a Blaschke product anymore). We now have that $B_n(0)\rightarrow a$ and that 
     $$
     |B_n^\#(0)-1|\leq |B_n^\#(0)-\phi_n^\#(0)|+|\phi_n^\#(0)-1|\rightarrow 0
     $$
     hence, the sequence $C_{B_n}$ admits a universal Blaschke product.

     Finally, modifying the initial sequence $\phi_n$ so that it converges to a different point of the unit circle we end up with a sequence $(C_{B_n},C_{\tilde{B}_n})$ with non-injective symbols, that admits a disjointly universal Blaschke product.
    \end{example}

    \section{Disjointly universal singular inner functions}

    Let $(\phi_{1, n})$ and $(\phi_{2, n})$ be two sequences of self maps of $\mathbb{D}$ with $\phi_{1, n}(0)\rightarrow a$ and $\phi_{2, n}(0)\rightarrow b$, for $a,b \in \mathbb{T}$. We want to determine if and when the sequences of operators $(C_{\phi_{1, n}},C_{\phi_{2, n}})$ admit a disjointly universal singular inner function. We first notice that since by Hurwitz's theorem, a sequence of zero free functions which converges on compact sets, has as a limit function either a zero free or the constantly zero function, the problem makes sense if we consider the composition operators $C_{\phi_{1, n}}$, and $C_{\phi_{2, n}}$ acting on the topological semigroup 
    $$
    \mathcal{S}:=\{f\in H(\mathbb{D}): 0<|f(z)|\leq 1, \forall z\in \mathbb{D}\} \cup \{0\}.
    $$
    It turns out that the existence of a disjointly $\mathcal{B}$-universal Blaschke product is equivalent to the existence of a disjointly $\mathcal{S}$-universal singular inner function for the pair of sequences $(C_{\phi_{1, n}},C_{\phi_{2, n}})$. In the following proof, we make an extensive use of the fact that functions from $\mathcal{S}$ have analytic roots. 

     \begin{theorem}\label{singular}
        Let $(\phi_{1, n})_{n=1}^{\infty}$ and $(\phi_{2, n})_{n=1}^{\infty}$ be sequences of holomorphic self maps of $\mathbb{D}$ and assume that $\phi_{1, n}(0)\rightarrow a$ and $\phi_{2, n}(0)\rightarrow b$, where $a, b\in \mathbb{T}$. The following assertions are equivalent.
        \begin{enumerate}
            \item The sequence $(C_{\phi_{1, n}},C_{\phi_{2, n}})$ admits an $\mathcal{S}$-universal singular inner function;
            \item there is a subsequence $(n_k)$ such that
            $$
            \lim_{k\rightarrow \infty}\frac{|\phi_{1, n_k}'(0)|}{1-|\phi_{1, n_k}(0)|^2}=\lim_{k\rightarrow \infty}\frac{|\phi_{2, n_k}'(0)|}{1-|\phi_{2, n_k}(0)|^2}=1
            $$
            and
            $$
            \rho(\phi_{1, n_k}(0),\phi_{2, n_k}(0))\rightarrow 1.
            $$
            \end{enumerate}
    \end{theorem}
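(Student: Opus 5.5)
We follow the scheme of the proof of Theorem \ref{Blaschke} and prove both implications separately.

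For $(1)\Rightarrow(2)$ we repeat the argument of $(1)\Rightarrow(3)$ of Theorem \ref{Blaschke}, replacing the target functions, which must lie in $\mathcal{S}$. Let $S$ be a disjointly $\mathcal{S}$-universal singular inner function. The identity $i$ and the translations $\tau_k$ have zeros, so they are not admissible. Instead we approximate zero-free functions in $\mathcal{S}$ whose hyperbolic derivative at $0$ is close to $1$. A natural choice is $g_r(z)=\exp\bigl(-r\frac{1+z}{1-z}\bigr)$ for small $r>0$; more simply, we take any zero-free $h\in\mathcal{S}$ with $h^\#(0)$ close to $1$. Such $h$ exist: for example $h=\exp(-t\frac{1+z}{1-z})$ has $h^\#(0)=2te^{-t}/(1-e^{-2t})\to 1$ as $t\to0$. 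We also use the constant $0\in\mathcal{S}$, or a constant close to $0$.

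Fix a sequence $h_k$ of zero-free functions in $\mathcal{S}$ with $h_k^\#(0)\to1$ and $h_k(0)\to 1$. Using Fact \ref{fact}(3), pick an exhaustion $(K_k)$ and tolerances $\delta_k$. By universality there is $(n_k)$ such that $S\circ\phi_{1,n_k}$ is $\delta_k$-close to $h_k$ on $K_k$, while $S\circ\phi_{2,n_k}$ is $\delta_k$-close to the constant $0$. Fact \ref{fact}(4) then gives $\phi_{1,n_k}^\#(0)\to1$. Moreover
$$
\rho(\phi_{1,n_k}(0),\phi_{2,n_k}(0))\ge\rho\bigl(S(\phi_{1,n_k}(0)),S(\phi_{2,n_k}(0))\bigr)\to\rho(1,0)=1.
$$
Repeating with the roles exchanged gives $\phi_{2,n_k}^\#(0)\to1$ along a different subsequence. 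To get one common subsequence we target the pair $(h_k,\tilde h_k)$, where $\tilde h_k$ is zero-free with $\tilde h_k^\#(0)\to1$ and $\tilde h_k(0)\to 0$; for instance $\tilde h_k=\exp(-s_k\frac{1+z}{1-z})$ scaled appropriately. The point is to keep the hyperbolic derivative near $1$ while $h_k(0)\to1$ and $\tilde h_k(0)\to0$. Then $\rho(h_k(0),\tilde h_k(0))\to1$, and all three conditions follow along a single subsequence. Checking that such targets exist in $\mathcal{S}$ (zero-free, with $h^\#(0)\approx1$ and $h(0)$ prescribed near the boundary or near $0$) is a small computation with $\exp(-t\frac{1+\zeta}{1-\zeta})$ composed with rotations.

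For $(2)\Rightarrow(1)$ we reuse the subsequence normalisations from the proof of Theorem \ref{Blaschke}: the maps $u_{i,n}$, the limits $\lambda,\mu,c,d$, and $S_{1,n}=C_{\overline\lambda u_{1,n}}$, $S_{2,n}=C_{\overline\mu u_{2,n}}$. We apply Theorem \ref{criterion} on $X=\mathcal{S}$ with identity $e=1$ and dense sets of finite products of singular inner functions with atoms. Let $D_0$ consist of finite products $\prod c_j\exp(-t_j\frac{\zeta_j+z}{\zeta_j-z})$, suitably normalised so that no point mass sits at $a,b$ and the function is $1$ there, or equivalently the unimodular constant is adjusted. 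Define $D_1$ and $D_2$ analogously, with atoms avoiding $-a\overline\lambda,c\overline\lambda$, respectively $-b\overline\mu,d\overline\mu$, and with value $1$ at those points. Such a function extends continuously at those points, so the verification of conditions (1)–(3) is word for word as in Theorem \ref{Blaschke}. The product $\prod x_i$ with $x_i\in D_0$ is then a singular inner function, provided the total mass converges and the unimodular constants are absorbed; the criterion's construction gives convergence in $\mathcal{S}$, and a locally uniform limit of singular inner functions that is not $0$ is of the form $\exp(-\int)$ with a singular measure. We handle this by requiring the constants to be $1$ and arguing that the measures add, with finite total mass because the product converges at $0$.

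The main obstacle is the density of $D_0,D_1,D_2$ in $\mathcal{S}$: we must show that finite atomic singular inner functions, times unimodular constants and with value $1$ at two prescribed boundary points, are dense in $\mathcal{S}$. We plan to do this through analytic roots and logarithms. A zero-free $f\in\mathcal{S}$ equals $e^{-g}$ with $\operatorname{Re}g\ge0$, and $g$ is the Herglotz integral of a positive measure plus an imaginary constant. Approximating the measure weak-$*$ by finite positive atomic measures avoiding the prescribed points gives locally uniform approximation. The constant $0$ is approximated by large masses. The extra condition of value $1$ at two points can then be arranged by adding tiny atoms near, but not at, those points, which tunes the arguments; alternatively we rely only on continuity at the points and adjust the constants.
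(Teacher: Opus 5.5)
\textbf{The gap.} The step in your $(1)\Rightarrow(2)$ that is meant to give a single common subsequence fails. Zero-free functions $\tilde h_k\in\mathcal{S}$ with $\tilde h_k^\#(0)\to 1$ and $\tilde h_k(0)\to 0$ do not exist, so the ``small computation'' you defer is false in the case ``$h(0)$ near $0$''. To see this, write a zero-free $F\in\mathcal{S}$ as $F=e^{-g}$ with $\operatorname{Re} g\geq 0$. Then $|g'(0)|\leq 2\operatorname{Re} g(0)=2\log\frac{1}{|F(0)|}$, and hence
$$
F^\#(0)=\frac{|g'(0)|\,|F(0)|}{1-|F(0)|^2}\leq \frac{t}{\sinh t},\qquad t=\log\frac{1}{|F(0)|},
$$
with equality for your $\exp(-t\frac{1+z}{1-z})$. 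So a hyperbolic derivative near $1$ forces $|F(0)|$ near $1$, and $F(0)\to 0$ forces $F^\#(0)\to 0$. No rotation or rescaling gets around this, since the bound holds for every zero-free $F$.

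\textbf{The repair.} It is easy within your approach. Take $h_k=\exp(-t_k\frac{1+z}{1-z})$ with $t_k\to 0$, and $\tilde h_k=-h_k$ (or $\omega h_k$ with $\omega\in\mathbb{T}\setminus\{1\}$). Then $\tilde h_k\in\mathcal{S}$ and $\tilde h_k^\#(0)=h_k^\#(0)=t_k/\sinh t_k\to 1$. Fact \ref{fact}(3),(4) give $\phi_{1,n_k}^\#(0)\to 1$ and $\phi_{2,n_k}^\#(0)\to 1$ along the same subsequence. Moreover $S(\phi_{1,n_k}(0))\to 1$ and $S(\phi_{2,n_k}(0))\to -1$, so Fact \ref{fact}(1) and the continuity of $\rho$ at $(1,-1)$ give $\rho(\phi_{1,n_k}(0),\phi_{2,n_k}(0))\to 1$.

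\textbf{Comparison with the paper.} Once repaired, your argument is a genuinely different route. The paper approximates the fixed extremal function $e^{\frac{z+1}{z-1}}$ and its roots $e^{\frac{z+1}{k^2(z-1)}}$. It then passes to $k$-th powers of the universal function, to which Schwarz--Pick still applies, to drive the values at $0$ towards $0$ and $1$. It recovers the hyperbolic derivatives from the fact that $e^{\frac{z+1}{z-1}}$ attains the bound $|F'(\alpha)|(1-|\alpha|^2)\leq 2/e$ valid on $\mathcal{S}$ \cite[Lemma 2.6]{BGGM09}. Your version needs only Fact \ref{fact} and an explicit family, and it mirrors the Blaschke case more closely.

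\textbf{The converse direction.} Your $(2)\Rightarrow(1)$ is the paper's argument; the paper simply cites \cite[Lemma 3.15]{Dave} for the density you plan to reprove. One correction there: a non-zero locally uniform limit of singular inner functions need not be singular inner, since singular inner functions are dense in $\mathcal{S}$. What works is the structure of the partial products $\prod_{i\leq N}x_i$. Their measures add, and the total mass stays bounded because the limit does not vanish at $0$. The unimodular constants then converge automatically, so you need not force them to equal $1$.
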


    \begin{pf}
        $(1)\Rightarrow (2)$. We consider the atomic singular inner function
        $$
        S(z)=e^{\frac{z+1}{z-1}}, z\in \mathbb{D}.
        $$
        For each $k\in \mathbb{N}$, we denote by $S^{1/k}$ the analytic branch of the $k$-th root of $S$, satisfying that $S^{1/k}(0)>0$. Concretely, 
        $$
        S^{1/k}(z)=e^{\frac{z+1}{k(z-1)}}, z\in \mathbb{D}.
        $$
        Let $(K_k)$ be an exhaustion of $\mathbb{D}$ with compact sets. Pick inductively a sequence of positive numbers $(\delta_k)$ such that $\delta_k\rightarrow 0$ and satisfying that
        $$
        \|g-S\|_{K_k}<\delta_k \Rightarrow \|g^k-S^k\|_{K_k}<\frac{1}{k}, \quad g\in \mathcal{S},
        $$
        and
        $$
        \|g-S^{1/k^2}\|_{K_k}<\delta_k \Rightarrow \max\{ \|g^k-S^{1/k}\|_{K_k}, \|g^{k^2}-S\|_{K_k}\}<\frac{1}{k}, \quad g\in \mathcal{S}.
        $$
        Let $f\in \mathcal{S}$ be a disjointly universal function for $(C_{\phi_{1, n}},C_{\phi_{2, n}})$. Find a subsequence $(n_k)$ such that
        $$
        \|f\circ \phi_{1, n_k}-S\|_{K_k}<\delta_k
        $$
        and
        $$
        \|f\circ \phi_{2, n_k}-S^{1/k^2}\|_{K_k}<\delta_k.
        $$
        It follows that $(f^k\circ \phi_{1, n_k})(0)=(f\circ \phi_{1, n_k})^k(0)\rightarrow 0$, since $S^{k}(0)=1/e^{k}\rightarrow 0$. Similarly, $(f^k\circ \phi_{2, n_k})(0)=(f\circ \phi_{2, n_k})^k(0)\rightarrow 1$, since $S^{1/k}(0)=1/e^{1/k}\rightarrow 1$. We now have
        $$
        \rho((f^k\circ \phi_{1, n_k})(0),(f^k\circ \phi_{2, n_k})(0))\leq  \rho(\phi_{1, n_k}(0),\phi_{2, n_k}(0))\leq 1
        $$
        and since $ \rho((f^k\circ \phi_{1, n_k})(0),(f^k\circ \phi_{2, n_k})(0))\rightarrow 1$ we get that $\rho(\phi_{1, n_k}(0),\phi_{2, n_k}(0))\rightarrow 1$. It clearly also holds that
        $$
        f\circ \phi_{1, n_k}\rightarrow S
        $$
        and that
        $$
        f^{k^2}\circ \phi_{2, n_k}=(f\circ \phi_{2, n_k})^{k^2}\rightarrow S,
        $$
        uniformly on compact sets. Since $|S'(0)|=2/e$, it follows that
        $$
        |(f\circ \phi_{1, n_k})'(0)|=|f'((\phi_{1, n_k})(0))|(1-|\phi_{1, n_k}(0)|^2)\phi_{1, n_k}^\#(0)\rightarrow \frac{2}{e}
        $$
        and
        $$
        |(f^{k^2}\circ \phi_{2, n_k})'(0)|=|(f^{k^2})'((\phi_{2, n_k})(0))|(1-|\phi_{2, n_k}(0)|^2)\phi_{2, n_k}^\#(0)\rightarrow \frac{2}{e}.
        $$
        For $F\in \mathcal{S}$ and $\alpha \in \mathbb{D}$, if 
        $$
        \tau_{\alpha}(z)=\frac{\alpha-z}{1-\overline{\alpha}z}, z\in \mathbb{D},
        $$
        we have from \cite[Lemma 2.6]{BGGM09} that $|(F\circ \tau_{\alpha})'(0)|\leq 2/e$. Therefore,
        $$
        |(F\circ \tau_{\alpha})'(0)|=|F'(\alpha)|(1-|\alpha|^2)\leq \frac{2}{e}.        
        $$
        Setting $F=f$ and $\alpha=\phi_{1, n_k}(0)$, we get that
        $$
        |f'(\phi_{1, n_k}(0))|(1-|\phi_{1, n_k}(0)|^2)\leq \frac{2}{e}
        $$
        and since $\phi_{1, n_k}^\#(0)\leq 1$, we conclude that $\phi_{1, n_k}^\#(0)\rightarrow 1$. Repeating the above argument with $F=f^{k^2}$ and $\alpha=\phi_{2, n_k}(0)$, we get that $\phi_{2, n_k}^\#(0)\rightarrow 1$.

        For the implication $(2)\Rightarrow (1)$ one applies Proposition \ref{criterion} with $X=\mathcal{S}$ and follow varbatim the proof of implication $(3)\Rightarrow (2)$ from Theorem \ref{Blaschke}. The only necessary modifications are that now we define the sets  
        $$
        D_1=\{S \quad \text{singular inner function}: S(a)=S(b)=1\},
        $$
        $$
        D_2=\{S \quad \text{singular inner function}: S(-a\overline{\lambda})=S(c\overline{\lambda})=1\}
        $$
        and
        $$
        D_3=\{S \quad \text{singular inner function}: S(-b\overline{\mu})=S(d\overline{\mu})=1\},
        $$
        which by \cite[Lemma 3.15]{Dave}, are dense in $\mathcal{S}$. The result is a disjointly universal function which is the infinite product of singular inner functions and hence, itself a singular inner function. 
    \end{pf}

    \section{Disjoint universality in $H(\mathbb{D})$}
    In this section we are comparing the properties of disjoint $\mathcal{B}$-universality with disjoint $H(\mathbb{D})$-universality. Concerning normal universality, Grosse-Erdmann and Mortini \cite{GrMo} proved that $\mathcal{B}$-universality implies $H(\mathbb{D})$-universality. We could have adapted their argument in our setting and establish the analogue statement. However, we chose to compare the characterizing properties of the two disjoint universalities which leads to a more controlled understanding of why one property is stronger than the other. 

    Let us start our discussion with a sequence $(\phi_n)$ of holomorphic self maps of $\mathbb{D}$. It is clear that $|\phi_n(0)|\rightarrow 1$ if and only if, for each $K\subset \mathbb{D}$ compact, there is $n_0\in \mathbb{N}$ such that $\phi_n(K)\cap K=\emptyset$, for $n\geq n_0$.

    Assume now, that the sequence $(\phi_n)$ satisfies that $\phi_n^\#(0)\rightarrow 1$. It follows then that there is a subsequence $(n_k)$ such that for each $K\subset \mathbb{D}$ compact, there is $k_0\in \mathbb{N}$, satisfying that $\phi_{n_k}|_K$ is univalent for each $k\geq k_0$. Indeed, let 
    $$
    u_n(z)=\frac{z-\phi_n(0)}{1-\overline{\phi_n(0)}z}.
    $$
    Using the Schwarz Lemma and a normal families argument as we did in the proof of Theorem \ref{Blaschke}, we get the existence of a subsequence $(n_k)$ and a $\lambda \in \mathbb{T}$ such that 
    $$
    (u_{n_k}\circ \phi_{n_k})(z)\rightarrow \lambda z
    $$ 
    uniformly on compact subsets of $\mathbb{D}$. From \cite[Lemma 5.1]{BGGM09} we get that the functions $(u_{n_k}\circ \phi_{n_k})|_K$ are eventually univalent which yields the same conclusion for the functions $\phi_{n_k}|_K$. The reverse is of course not true as can be seen by considering 
    $$
    \phi_n(z)=a_nz+1-a_n,
    $$
    where $0<a_n<1$, and $a_n\rightarrow 0$.

    We next consider two sequences of self maps of $\mathbb{D}$, say $(\phi_{1,n})$ and $(\phi_{2,n})$. We claim that the assumption $\rho(\phi_{1,n}(0),\phi_{2,n}(0))\rightarrow 1$ implies that for each $K\subset \mathbb{D}$ compact, there is $n_0\in \mathbb{N}$ such that 
    $$
    \phi_{1,n}(K)\cap \phi_{2,n}(K)=\emptyset, \quad n\geq n_0.
    $$
    Indeed, if 
    $$
     u_{1,n}(z)=\frac{z-\phi_{1,n}(0)}{1-\overline{\phi_{1,n}(0)}z}
    $$
    then the Schwarz Lemma implies that $(u_{1,n}\circ \phi_{1,n})(K)\subset K$. The assumption is that $|(u_{1,n}\circ \phi_{2,n})(0)|\rightarrow 1$ which, as discussed above, implies that $(u_{1,n}\circ \phi_{2,n})(K)\cap K=\emptyset$, eventually. This gives that $(u_{1,n}\circ \phi_{2,n})(K)\cap (u_{1,n}\circ \phi_{1,n})(K)=\emptyset$ and hence that $\phi_{1,n}(K)\cap \phi_{2,n}(K)=\emptyset$ eventually. Clearly the reverse is not true if we consider the sequences of constant functions $\phi_{1,n}=0$, and $\phi_{2,n}=1/2$, for each $n\in \mathbb{N}$.

    We can however achieve a partial inverse of the previous statement if we assume further that $\phi_{1,n}^\#(0)\rightarrow 1$. Specifically, if $\phi_{1,n}^\#(0)\rightarrow 1$ and if for each $K\subset \mathbb{D}$ compact, there is $n_0\in \mathbb{N}$ satisfying that $\phi_{1,n}(K)\cap \phi_{2.n}(K)=\emptyset$, for each $n\geq n_0$, then there is subsequence $(n_k)$ such that $\rho(\phi_{1,n_k}(0),\phi_{2,n_k}(0))\rightarrow 1$. Indeed, for $u_{1,n}$ defined as before, we get by the usual combination of the Schwarz lemma together with a normal families argument, the existence of a subsequence $(n_k)$, and a $\lambda \in \mathbb{T}$, such that 
    $$
    (u_{1,n_k}\circ \phi_{1,n_k})(z)\rightarrow \lambda z
    $$
    uniformly on compact subsets of $\mathbb{D}$. Since the maps $u_{1,n}$ are automorphisms of $\mathbb{D}$, it holds that 
    $$
    (u_{1,n_k}\circ \phi_{1,n_k})(K)\cap (u_{1,n_k}\circ \phi_{2,n_k})(K)=\emptyset 
    $$
    eventually. It easily follows now that
    $$
    |(u_{1,n_k}\circ \phi_{2,n_k})(0)|=\rho(\phi_{1,n_k}(0),\phi_{2,n_k}(0))\rightarrow 1.
    $$

    The preceding discussion, allows us to formulate the following detailed theorem which, in a sense, quantifies the distance between disjoint $\mathcal{B}$-universality and disjoint $H(\mathbb{D})$-universality. 

    \begin{theorem}
        Let $(\phi_{1,n})$ and $(\phi_{2,n})$ be sequences of holomorphic self maps of $\mathbb{D}$ such that for some $a,b \in \mathbb{T}$, $\phi_{1,n}(0)\rightarrow a$ and $\phi_{2,n}(0)\rightarrow b$. Consider the following statements.
        \begin{enumerate}
            \item $(C_{\phi_{1,n}},C_{\phi_{2,n}})$ are disjointly universal on $\mathcal{B}$;

            \item there is a subsequence $(n_k)$ such that $\phi_{1,n_k}^\#(0)\rightarrow 1$, $\phi_{2,n_k}^\#(0)\rightarrow 1$, and\\ $\rho(\phi_{1,n_k}(0),\phi_{2,n_k}(0))\rightarrow 1$;

            \item there is a subsequence $(n_k)$ such that $\phi_{1,n_k}^\#(0)\rightarrow 1$, $\phi_{2,n_k}^\#(0)\rightarrow 1$, and satisfying that for each $K\subset \mathbb{D}$ compact, there is $k_0\in \mathbb{N}$ such that $\phi_{1,n_k}(K)\cap \phi_{2,n_k}(K)=\emptyset$, for $k\geq k_0$;

            \item there is a subsequence $(n_k)$ such that for each $K\subset \mathbb{D}$ compact, there is $k_0\in \mathbb{N}$ satisfying that $\phi_{1,n_k}|_K$ and $\phi_{2,n_k}|_K$ are univalent, and that $\phi_{1,n_k}(K)\cap \phi_{2,n_k}(K)=\emptyset$, for $k\geq k_0$;

            \item $(C_{\phi_{1,n}},C_{\phi_{2,n}})$ are disjointly universal on $H(\mathbb{D})$.
        \end{enumerate}
        Then we have the implications $(1)\Leftrightarrow (2)\Leftrightarrow (3)\Rightarrow (4)\Leftrightarrow (5)$, and $(4)\nRightarrow (3)$ in general.
    \end{theorem}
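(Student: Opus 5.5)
The plan is to assemble the chain from results already in hand, so that genuinely new work is needed only for $(4)\Leftrightarrow(5)$ and for the counterexample. First, $(1)\Leftrightarrow(2)$ is Theorem \ref{Blaschke}, since $(1)$ of that theorem is disjoint $\mathcal{B}$-universality.

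For $(2)\Leftrightarrow(3)$ I will use the preceding discussion. The direction $(2)\Rightarrow(3)$ is the claim that $\rho(\phi_{1,n_k}(0),\phi_{2,n_k}(0))\rightarrow 1$ forces $\phi_{1,n_k}(K)\cap\phi_{2,n_k}(K)=\emptyset$ eventually. For $(3)\Rightarrow(2)$ I apply the partial inverse along the subsequence $(n_k)$, using $\phi_{1,n_k}^\#(0)\rightarrow 1$. This yields a further subsequence along which the pseudo-hyperbolic distances tend to $1$. The hyperbolic derivatives still tend to $1$ along it, so $(2)$ holds.

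For $(3)\Rightarrow(4)$ I use the preceding observation that $\phi_n^\#(0)\rightarrow1$ gives a subsequence that is eventually univalent on each compact set. This comes from $u_{n}\circ\phi_{n}\rightarrow\lambda z$ together with \cite[Lemma 5.1]{BGGM09}. I apply it first to $(\phi_{1,n_k})$ and then, along the resulting subsequence, to $(\phi_{2,n_k})$. The disjointness of the images survives passing to subsequences.

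For $(4)\Leftrightarrow(5)$ I plan to mimic the univalent-symbol characterization of $H(\mathbb{D})$-universality for composition operators (Grosse-Erdmann–Mortini type), now in disjoint form.

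\emph{Sufficiency, $(4)\Rightarrow(5)$.} I use Proposition \ref{Birkhoff} on $H(\mathbb{D})$ with its additive structure. Given a compact disc $K=r\overline{\mathbb{D}}$ and polynomials $p,q_1,q_2$, I want $f$ with $\|f-p\|_K$, $\|f\circ\phi_{1,n_k}-q_1\|_K$ and $\|f\circ\phi_{2,n_k}-q_2\|_K$ all small. The sets $K$, $\phi_{1,n_k}(K)$ and $\phi_{2,n_k}(K)$ are pairwise disjoint for large $k$. Disjointness from $K$ follows because $|\phi_{i,n}(0)|\rightarrow1$ and $\phi_{i,n}(K)$ has bounded hyperbolic diameter by Schwarz–Pick. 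Each $\phi_{i,n_k}$ is univalent on a slightly larger disc, so $\phi_{i,n_k}(K)$ is a Jordan domain closure with connected complement. I then define $g$ on the union of the three compact sets by $g=p$ on $K$ and $g=q_i\circ(\phi_{i,n_k}|_K)^{-1}$ on $\phi_{i,n_k}(K)$. This $g$ is holomorphic near the union. The union has connected complement in $\mathbb{C}$: three disjoint full compacta, the images being hyperbolically small and near the boundary. Runge's theorem then gives a polynomial $f$ approximating $g$.

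\emph{Necessity, $(5)\Rightarrow(4)$.} Take a disjointly universal $f$ and choose $n_k$ with $f\circ\phi_{1,n_k}\rightarrow \mathrm{id}$ and $f\circ\phi_{2,n_k}\rightarrow \mathrm{id}+3$ uniformly on the compact sets $K_k$ of an exhaustion. Univalence of $\phi_{i,n_k}$ on $K$ follows from Hurwitz: the limits are univalent on a neighbourhood of $K$, so the compositions are eventually univalent there, hence so are the $\phi_{i,n_k}$. Disjointness follows because $f\circ\phi_{1,n_k}(K)$ lies near $K$ while $f\circ\phi_{2,n_k}(K)$ lies near $K+3$, and these are disjoint. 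A common point of $\phi_{1,n_k}(K)$ and $\phi_{2,n_k}(K)$ would be impossible.

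Finally, for $(4)\nRightarrow(3)$ I take $\phi_{1,n}(z)=a_nz+1-a_n$ with $a_n\rightarrow0$ and $\phi_{2,n}(z)=-\phi_{1,n}(z)$. These are univalent with disjoint images, while $\phi_{i,n}^\#(0)=a_n/(1-(1-a_n)^2)\rightarrow 1/2\neq1$ along every subsequence.

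The main obstacle I expect is the Runge step in $(4)\Rightarrow(5)$. There I must verify carefully that the complement of $K\cup\phi_{1,n_k}(K)\cup\phi_{2,n_k}(K)$ is connected, for which I need the univalence on a neighbourhood of $K$. I also need the approximation in $H(\mathbb{D})$ to be a polynomial, which Runge provides.
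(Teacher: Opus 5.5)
Your proposal is correct. For $(1)\Leftrightarrow(2)\Leftrightarrow(3)\Rightarrow(4)$ and for the counterexample you do what the paper does. You use Theorem \ref{Blaschke}, the discussion preceding the theorem, and the same maps $\pm(a_nz+1-a_n)$, with the correct computation $\phi_{i,n}^\#(0)=1/(2-a_n)\to 1/2$.

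The genuine difference is $(4)\Leftrightarrow(5)$. The paper does not prove this; it simply cites \cite[Theorem 2.1]{BeMa}. You prove it directly. Sufficiency comes from Runge's theorem fed into Proposition \ref{Birkhoff}. Necessity comes from Hurwitz applied to $f\circ\phi_{1,n_k}\to z$ and $f\circ\phi_{2,n_k}\to z+3$. What your route buys:
\begin{itemize}
\item it is self-contained;
\item it shows exactly where the standing hypothesis $|\phi_{i,n}(0)|\to 1$ enters, namely to get $K\cap\phi_{i,n_k}(K)=\emptyset$, which is not part of (4);
\item it yields residuality of the set of disjointly universal functions for free.
\end{itemize}
The citation is shorter and comes from a general characterization.

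One step should be justified differently. The connectedness of $\mathbb{C}\setminus\bigl(K\cup\phi_{1,n_k}(K)\cup\phi_{2,n_k}(K)\bigr)$ does not follow from the images being hyperbolically small and near $\mathbb{T}$. It follows from a general fact: a finite union of pairwise disjoint compact sets, each with connected complement, has connected complement. This is Janiszewski's theorem with empty intersection, or Alexander duality, since $\check H^1$ of a disjoint union is the direct sum. Univalence on a neighbourhood of $K$ makes each $\phi_{i,n_k}(K)$ a closed Jordan domain, so the fact applies.

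A minor point in the necessity part: choosing a strictly increasing $(n_k)$ uses that every nonempty open subset of $H(\mathbb{D})^2$ contains infinitely many orbit points. This holds because the space has no isolated points.
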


    \begin{pf}
        The only claim that we need to justify is that $(4)\nRightarrow (3)$. This can be easily seen by considering for $n\in \mathbb{N}$, $\phi_{1,n}(z)=a_nz+1-a_n$, for $0<a_n<1$ and $a_n\rightarrow 0$, and $\phi_{2,n}=-\phi_{1,n}$. We notice that the implication $(4)\Leftrightarrow (5)$ is \cite[Theorem 2.1]{BeMa}.
    \end{pf}

    \section{Concluding Remarks}

    As we mentioned before, the technical part in the proof of Theorem \ref{criterion}, is the existence of a disjointly universal vector of the form $\prod_{i=1}^{\infty}x_i$, for $x_i \in D_0$. In situations where the set
    $$
    \{ \prod_{i=1}^{\infty}x_i \in X: x_i D_0\}
    $$
    is residual, the proof would be an immediate consequence of the Baire category theorem and Proposition \ref{Birkhoff}. While writing the paper, we did not know whether the set of Blashke products is residual in $\mathcal{B}$ and similarly whether the set of singular inner functions is residual in $\mathcal{S}$ under the topology of uniform convergence on compact subsets. During a personal communication with Chalmoukis, we shared the two problems with him, he answered both of them positively, and he posted the proofs on his personal page \cite{Cha}. Therefore, our two main results, Theorem \ref{Blaschke} and Theorem \ref{singular}, admit shorter proofs, through the simpler part of Theorem \ref{criterion}. We chose to keep Theorem \ref{criterion} in its original form, for potential future applications in cases where the Baire category theorem does not apply. 

    As noted in the introduction, all our results concerning disjoint universality of a pair of sequences of maps, extend directly to any number of sequences. More specifically, consider the $N\in \mathbb{N}$ sequences of holomorphic self-maps of $\mathbb{D}$, $(\phi_{1,n})_n, \dots ,(\phi_{N,n})_n$, which satisfy $\phi_{i,n}(0)\rightarrow a_i\in \mathbb{T}$, $1\leq i\leq N$. If there is a subsequence $(n_k)$ such that $\phi_{i,n_k}^\#(0)\rightarrow 1$, $1\leq i\leq N$ and $\rho(\phi_{i,n_k}(0), \phi_{j,n_k}(0))\rightarrow 1$, whenever $i\neq j$, then there exists a Blaschke product $B$ such that the set $\{(B\circ \phi_{1,n},\dots ,B\circ \phi_{N,n}): n\in \mathbb{N}\}$ becomes dense in $\mathcal{B}^N$. Furthermore, by Chalmoukis's result \cite{Cha} and a Baire category argument, we get that the set of such disjointly universal Blaschke products is residual in $\mathcal{B}$. We next examine to what extent we can push this argument.

    The definition of disjoint universality can be extended verbatim to a countable infinite collection of sequences of maps. So, the sequences of continuous self-maps on the topological space $X$, $(T_{N,n})_n$, $N\in \mathbb{N}$, will be called {\it disjointly universal}, provided that there exists $x\in X$, which will be called a {\it disjointly universal element} satisfying that the set
    $$
    \{(T_{1,n}(x), T_{2,n}(x),\dots ): n\in \mathbb{N}\}
    $$
    becomes dense in $X^{\mathbb{N}}$, when the latter is endowed with the product topology. The very definition of product topology makes it clear that disjoint universality of a sequence of maps is just {\it common disjoint universality} of the finite truncations of it. More precisely, an $x\in X$ is disjointly universal for $(T_{1,n})_n, (T_{2,n})_n, \dots$ if and only if it is disjointly universal for $(T_{1,n})_n, \dots , (T_{N,n})_n$, for all $N\in \mathbb{N}$. 

    Let now $(\phi_{1,n})_n, (\phi_{2,n})_n,\dots$ be sequences of holomorphic self maps of $\mathbb{D}$, such that $\phi_{i,n}\rightarrow a_i\in \mathbb{T}$, for $i\in \mathbb{N}$. If there exists a subsequence $(n_k)$ such that $\phi_{i,n_k}^\#(0)\rightarrow 1$, $i\in \mathbb{N}$ and $\rho(\phi_{i,n_k}(0), \phi_{j,n_k}(0))\rightarrow 1$, whenever $i\neq j$, then since the set of disjointly universal Blaschke products for $(C_{\phi_{1,n}})_n, \dots ,(C_{\phi_{N,n}})_n$ for $N\in \mathbb{N}$, is residual in $\mathcal{B}$, we get a Blaschke product which is disjointly universal for the sequences $(C_{\phi_{1,n}})_n, (C_{\phi_{2,n}})_n,\dots$. In particular, if we start with a dense subset $\{a_i: i\in \mathbb{N}\}$ of $\mathbb{T}$, and for each $i\in \mathbb{N}$ we consider a disc automorphism $\phi_i \in Aut(\mathbb{D})$ with attractive fixed point at $a_i$, we get a Blaschke product $B$ such that the set $\{(B\circ \phi_1^n, B\circ \phi_2^n,\dots): n\in \mathbb{N}\}$ is dense in $\mathcal{B}^{\mathbb{N}}$. Evidently, the singular set of such a Blaschke product is the whole circle $\mathbb{T}$. An analogous situation holds for singular inner functions, and $\mathcal{S}$ instead of $\mathcal{B}$.

    Finally, let us observe that this is as far as one can hope to get with increasing the number of sequences. Indeed, if $X$ is a topological space with at least two points and if $\kappa$ is an uncountable cardinal number, then $X^{\kappa}$ is not separable with the product topology, hence no set of the form $\{ (T_{i,n}(x))_i: n\in \mathbb{N}\}$ can be dense in $X^{\kappa}$. 

    \section{Appendix: Proof of Theorem \ref{criterion}}

    For the first claim of the theorem we apply Proposition \ref{Birkhoff}. Let $U, V_1$, and $V_2$ be non-empty open subsets of $X$. We pick $x\in D_0\cap U$, $y\in D_1\cap V_1$ and $z\in D_2\cap V_2$. Setting
    $$
    w_k=xS_{1, n_k}yS_{2, n_k}z,
    $$
    we readily see that
    $$
    w_k \rightarrow x, \quad T_{1, n_k} w_k \rightarrow y, \quad \mbox{and}  \quad T_{2, n_k} w_k \rightarrow z,   
    $$
   as $k \rightarrow \infty$.
    Hence, the assumptions of Proposition \ref{Birkhoff} are satisfied for $k$ sufficiently large.

    We now prove the second claim which constitutes the long and technical part of the proof.
    Assume that the topology of $X$ is induced by the metric $d$, and let $(y_k,z_k)_{k=1}^{\infty}\subset D_1\times D_2$ be a dense sequence in $X\times X$. We will construct a disjointly universal vector for $(T_{1,n})_n, (T_{2,n})_n$  by induction on $k\in \mathbb{N}$.

    For the base of the inductive process, let $\delta_1>0$ be such that
    \begin{equation}\label{eq1}
        d(a,y_1)<\delta_1 \quad \text{and} \quad d(b,e)<\delta_1 \Rightarrow d(ab,e)<\frac{1}{2},
    \end{equation}
    and
    \begin{equation}\label{eq2}
        d(a,e)<\delta_1 \quad \text{and} \quad d(b,z_1)<\delta_1 \Rightarrow d(ab,y_1)<\frac{1}{2}.
    \end{equation}
    Pick $\delta_1'>0$ such that
    \begin{equation}\label{eq3}
        d(a,y_1)<\delta_1' \quad \text{and} \quad d(b,e)<\delta_1' \Rightarrow d(ab,y_1)<\frac{\delta_1}{2},
    \end{equation}
    \begin{equation}\label{eq3'}
        d(a,e)<\delta_1' \quad \text{and} \quad d(b,z_1)<\delta_1' \Rightarrow d(ab,z_1)<\frac{\delta_1}{2},
    \end{equation}
    and
     \begin{equation}\label{eq3''}
        d(a,e)<\delta_1' \quad \text{and} \quad d(b,e)<\delta_1' \Rightarrow d(ab,e)<\frac{\delta_1}{2}.
    \end{equation}
    Using the assumptions, we find $n_1\in \mathbb{N}$, such that
    \begin{equation}\label{eq4}
        d(S_{1, n_1}y_1,e)<\delta_1',
    \end{equation}
    \begin{equation}\label{eq4'}
       d(S_{2, n_1}z_1,e)<\delta_1', 
    \end{equation}
    \begin{equation}\label{eq5}
       d(T_{1, n_1}S_{1, n_1}y_1,y_1)<\delta_1',   
    \end{equation}
    \begin{equation}\label{eq5'}
      d(T_{1, n_1}S_{2, n_1}z_1,e)<\delta_1',       
    \end{equation}
    \begin{equation}\label{eq6}
          d(T_{2, n_1}S_{1, n_1}y_1,e)<\delta_1',
    \end{equation}
    and
    \begin{equation}\label{eq6'}
      d(T_{2, n_1}S_{2, n_1}z_1,z_1)<\delta_1'.  
    \end{equation}

    Notice that (\ref{eq3''}), (\ref{eq4}), and  (\ref{eq4'}) imply that
    \begin{equation}\label{eq7}
        d(S_{1, n_1}y_1S_{2, n_1}z_1,e)<\frac{\delta_1}{2},
    \end{equation}
     (\ref{eq3}), (\ref{eq5}) and (\ref{eq5'}) imply that
    \begin{equation}\label{eq8}
        d(T_{1,n_1}S_{1,n_1}y_1 T_{1,n_1}S_{2,n_1}z_1,y_1)<\frac{\delta_1}{2},
    \end{equation}
   and (\ref{eq3'}), (\ref{eq6}), (\ref{eq6'}) imply that
    \begin{equation}\label{eq9}
        d(T_{2, n_1}S_{1,n_1}y_1 T_{2,n_1}S_{2,n_1}z_1,z_1)<\frac{\delta_1}{2}.
    \end{equation}
    
    Let $0<\delta_1'' < \frac{\delta_1}{2}$ be such that
    \begin{equation}\label{eq10}
        d(a,S_{1,n_1}y_1S_{2,n_1}z_1)<\delta_1'' \Rightarrow d(T_{1,n_1}a,T_{1,n_1}(S_{1,n_1}y_1S_{2,n_1}z_1))<\frac{\delta_1}{2}, 
    \end{equation}
    and
    \begin{equation}\label{eq11}
        d(a,S_{1,n_1}y_1S_{2,n_1}z_1)<\delta_1'' \Rightarrow d(T_{2,n_1}a,T_{2,n_1}(S_{1,n_1}y_1S_{2,n_1}z_1))<\frac{\delta_1}{2}. 
    \end{equation}
    Pick $x_1\in D_0$ such that
    \begin{equation}\label{eq12}
        d(x_1,S_{1, n_1}y_1S_{2, n_1}z_1)<\delta_1''.
    \end{equation}
    The triangle inequality together with (\ref{eq7}) and (\ref{eq12}) give
    $$
        d(x_1,e)<\delta_1,
    $$
    together with (\ref{eq8}), (\ref{eq10}), and (\ref{eq12}) give
    $$
        d(T_{1, n_1}x_1,y_1)<\delta_1,
    $$
    and, finally, together with (\ref{eq9}), (\ref{eq11}), and (\ref{eq12}) give 
    $$
        d(T_{2, n_1}x_1,z_1)<\delta_1.
    $$
   
    Let us now assume that the inductive process has been carried out for the first $k-1$ steps, where $k \geq 2$. We choose $\delta_k>0$ such that
    \begin{equation}\label{eq16}
        d(a,e)<\delta_k, \quad d(b,y_k)<\delta_k \quad \text{and} \quad d(c,e)<\delta_k \Rightarrow d(abc,y_k)<\frac{1}{2^k},
    \end{equation}
    \begin{equation}\label{eq17}
        d(a,e)<\delta_k, \quad d(b,z_k)<\delta_k \quad \text{and} \quad d(c,e)<\delta_k \Rightarrow d(abc,z_k)<\frac{1}{2^k},
    \end{equation}
    \begin{equation}\label{eq18}
    d(a,e)<\delta_k \Rightarrow d(a\prod_{i=1}^{k-1}x_i, \prod_{i=1}^{k-1}x_i)<\frac{1}{2^k},
    \end{equation}
    \begin{equation}\label{eq19}
        d(a,e)<\delta_k \Rightarrow d(T_{1, n_{k-1}}a,e)<\delta_{k-1}, 
    \end{equation}
    \begin{equation}\label{eq20}
        d(a,e)<\delta_{k} \Rightarrow d(T_{2, n_{k-1}}a,e)<\delta_{k-1},
    \end{equation}
    and also such that for all $1\leq j\leq k-2$, we have
    \begin{equation}\label{eq21}
        d(a,e)<\delta_k \quad \text{and} \quad d(\prod_{i=j+1}^{k-1}T_{1, n_j}x_i,e)<\delta_j \Rightarrow d(a\prod_{i=j+1}^{k-1}T_{1, n_j}x_i,e)<\delta_j, 
    \end{equation}
    and
    \begin{equation}\label{eq22}
        d(a,e)<\delta_k \quad \text{and} \quad d(\prod_{i=j+1}^{k-1}T_{2, n_j}x_i,e)<\delta_j \Rightarrow d(a\prod_{i=j+1}^{k-1}T_{2, n_j}x_i,e)<\delta_j.
    \end{equation}

    Let $\delta_k'>0$ be such that
    \begin{equation}\label{eq23}
        d(a,y_k)<\delta_k' \quad \text{and} \quad d(b,e)<\delta_k' \Rightarrow d(ab,y_k)<\frac{\delta_k}{2},
    \end{equation}
    \begin{equation}\label{eq24}
        d(a,e)<\delta_k' \quad \text{and} \quad d(b,z_k)<\delta_k' \Rightarrow d(ab,z_k)<\frac{\delta_k}{2},
    \end{equation}
    \begin{equation}\label{eq25}
        d(a_i,e)<\delta_k', i=1,\dots ,k \Rightarrow d(\prod_{i=1}^{k}a_i,e)<\frac{\delta_k}{2}.
    \end{equation}

    Pick $n_k>n_{k-1}$ such that
     \begin{equation}\label{eq26}
        d(S_{1, n_k}y_k,e)<\delta_k'
    \end{equation}
    \begin{equation}\label{eq27}
        d(S_{2, n_k}z_k,e)<\delta_k'
    \end{equation}
    and, such that for each $1\leq j\leq k-1$, 
    \begin{equation}\label{eq28}
       d(T_{1, n_j}(S_{1, n_k}y_k),e)<\delta_k',
    \end{equation}
    \begin{equation}\label{eq29}
       d(T_{1, n_j}(S_{2, n_k}z_k),e)<\delta_k', 
    \end{equation}
     \begin{equation}\label{eq30}
       d(T_{2, n_j}(S_{1, n_k}y_k),e)<\delta_k', 
    \end{equation}
    \begin{equation}\label{eq31}
       d(T_{2, n_j}(S_{2, n_k}z_k),e)<\delta_k', 
    \end{equation}
    \begin{equation}\label{eq32}
       d(T_{1, n_k}S_{1, n_k}y_k,y_k)<\delta_k',  
    \end{equation}
    \begin{equation}\label{eq33}
        d(T_{1, n_k}S_{2, n_k}z_k,e)<\delta_k',
    \end{equation}
    \begin{equation}\label{eq34}
         d(T_{2, n_k}S_{1, n_k}y_k,e)<\delta_k',
    \end{equation}
    \begin{equation}\label{eq35}
         d(T_{2, n_k}S_{2, n_k}z_k,z_k)<\delta_k',
    \end{equation}
    \begin{equation}\label{eq36}
     d(T_{1, n_k}x_j,e)<\delta_k',  \quad \text{and}   
    \end{equation}
     \begin{equation}\label{eq37}
     d(T_{2, n_k}x_j,e)<\delta_k'.
    \end{equation}


    From (\ref{eq25}), (\ref{eq26}), and (\ref{eq27}), it follows that
    \begin{equation}\label{eq38}
        d(S_{1, n_k}y_kS_{2, n_k}z_k,e)<\frac{\delta_k}{2}.
    \end{equation}
    From (\ref{eq25}), (\ref{eq28}), and (\ref{eq29}), it follows that for each $1\leq j\leq k-1$,
    \begin{equation}\label{eq39}
       d(T_{1, n_j}(S_{1, n_k}y_kS_{2, n_k}z_k),e)<\frac{\delta_k}{2}.
    \end{equation}
     From (\ref{eq25}), (\ref{eq30}), and (\ref{eq31}), it follows that for each $1\leq j\leq k-1$,
     \begin{equation}\label{eq40}
     d(T_{2, n_j}(S_{1, n_k}y_kS_{2, n_k}z_k),e)<\frac{\delta_k}{2}.
    \end{equation}
    From  (\ref{eq23}), (\ref{eq32}), and (\ref{eq33}), it follows that
    \begin{equation}\label{eq41}
    d(T_{1, n_k}S_{1, n_k}y_kT_{1, n_k}S_{2, n_k}z_k,y_k)<\frac{\delta_k}{2}.
    \end{equation}
   From (\ref{eq24}), (\ref{eq34}), and (\ref{eq35}), it follows that
    \begin{equation}\label{eq42}
d(T_{2, n_k}S_{1, n_k}y_kT_{2, n_k}S_{2, n_k}z_k,z_k)<\frac{\delta_k}{2}.
    \end{equation}
    From (\ref{eq25}), and (\ref{eq36}),  it follows that
    \begin{equation}\label{eq43}
        d(T_{1, n_k}(\prod_{i=1}^{k-1}x_i),e)<\delta_k.
    \end{equation}
    From (\ref{eq25}), and (\ref{eq37}), it follows that
    \begin{equation}\label{eq44}
      d(T_{2, n_k}(\prod_{i=1}^{k-1}x_i),e)<\delta_k.
    \end{equation}

    Find $0<\delta_k''<\frac{\delta_k}{2}$ such that for each $1\leq j\leq k$,
    \begin{equation}\label{eq45}
        d(a,S_{1, n_k}y_kS_{2, n_k}z_k)<\delta_k'' \Rightarrow d(T_{1, n_j}a, T_{1, n_j}(S_{1, n_k}y_kS_{2, n_k}z_k))<\frac{\delta_k}{2}
    \end{equation}
    \begin{equation}\label{eq46}
       d(a,S_{1, n_k}y_kS_{2, n_k}z_k)<\delta_k'' \Rightarrow d(T_{2, n_j}a, T_{2, n_j}(S_{1, n_k}y_kS_{2, n_k}z_k))<\frac{\delta_k}{2}.
    \end{equation}

    Let $x_k \in D_0$ be such that
    \begin{equation}\label{eq47}
        d(x_k,S_{1, n_k}y_kS_{2, n_k}z_k)<\delta_k''.
    \end{equation}

    The triangle inequality together with (\ref{eq18}), (\ref{eq38}), and (\ref{eq47}) give that
    \begin{equation}\label{eq48}
        d(\prod_{i=1}^{k}x_i,\prod_{i=1}^{k-1}x_i)<\frac{1}{2^k}.
    \end{equation}
    The triangle inequality together with (\ref{eq41}), (\ref{eq45}), and (\ref{eq47}) give that
    \begin{equation}\label{eq59}
        d(T_{1, n_k}x_k,y_k)<\delta_k.
    \end{equation}
    The triangle inequality together with (\ref{eq42}), (\ref{eq46}), and (\ref{eq47})  give that
    \begin{equation}\label{eq50}
        d(T_{2, n_k}x_k,z_k)<\delta_k.
    \end{equation}
    The previous steps of the inductive construction together with the triangle inequality and  (\ref{eq19}), (\ref{eq21}), (\ref{eq38}), (\ref{eq39}), (\ref{eq45}), and (\ref{eq47}) give that for each $1\leq j\leq k-1$,
    \begin{equation}\label{eq51}
    d(T_{1, n_j}(\prod_{i=j+1}^{k}x_i),e)<\delta_j.   
    \end{equation}
    Similarly, (\ref{eq20}), (\ref{eq22}), (\ref{eq38}), (\ref{eq40}), (\ref{eq46}), and (\ref{eq47}) imply that for each $1\leq j\leq k-1$,
    \begin{equation}\label{eq52}
       d(T_{2, n_j}(\prod_{i=j+1}^{k}x_i),e)<\delta_j.
    \end{equation}
    This completes the $k$-th step of the inductive construction.

    We notice that by (\ref{eq48}), the sequence $(\prod_{i=1}^{k}x_i)_{k=1}^{\infty}$ is Cauchy hence it converges to an element $x\in X$ and we may write
    \begin{equation*}
        x=\prod_{i=1}^{\infty}x_i.
    \end{equation*}
    We claim that $x$ is a disjointly universal vector for $(T_{1,n})_n$, $(T_{2,n})_n$. Indeed, by (\ref{eq16}), (\ref{eq43}), (\ref{eq59}), and (\ref{eq51}) we have that
    \begin{equation*}
        d(T_{1, n_k}x,y_k)=\lim_{N\rightarrow \infty}d(T_{1, n_k}(\prod_{i=1}^{k-1}x_i)T_{1, n_k}x_k T_{1, n_k}(\prod_{i=k+1}^Nx_i),y_k)\leq \frac{1}{2^k}. 
    \end{equation*}
    Similarly, by (\ref{eq17}), (\ref{eq44}), (\ref{eq50}), and (\ref{eq52}) we get that
    \begin{equation*}
        d(T_{2, n_k}x,z_k)=\lim_{N\rightarrow \infty}d(T_{2, n_k}(\prod_{i=1}^{k-1}x_i)T_{2, n_k}x_k T_{2, n_k}(\prod_{i=k+1}^Nx_i),z_k)\leq \frac{1}{2^k} 
    \end{equation*}
    which concludes the proof.

\end{document}